\DeclareMathOperator{\Tr}{Tr}
\DeclareMathOperator{\Gal}{Gal}
\newcommand{\C}{{\mathbb C}}
\newcommand{\F}{{\mathbb F}}
\newcommand{\N}{{\mathbb N}}
\newcommand{\Z}{{\mathbb Z}}
\newcommand{\card}[1]{\left|{#1}\right|}
\newcommand{\sums}[1]{\sum_{\substack{#1}}}
\newcommand{\Fu}{F^*}
\newcommand{\Fp}{\F_p}
\newcommand{\Ftwo}{\F_2}
\newcommand{\Fptothem}{\F_{p^m}}
\newcommand{\Fptothetwom}{\F_{p^{2 m}}}
\newcommand{\Fptothen}{\F_{p^n}}
\newcommand{\Ftwotothen}{\F_{2^n}}
\newcommand{\Ffour}{\F_4}
\newcommand{\ac}[1]{\overline{#1}}
\newcommand{\acF}{\ac{F}}
\newcommand{\acFu}{\ac{F}^*}
\newcommand{\acFp}{\ac{\F}_p}
\newcommand{\acFpu}{\ac{\F}_p^*}
\newcommand{\acFtwou}{\ac{\F}_2^*}
\newcommand{\cP}{{\mathcal P}}
\newcommand{\cW}{{\mathcal W}}
\newcommand{\cZ}{{\mathcal Z}}
\newtheorem{theorem}{Theorem}[section]
\newtheorem{proposition}[theorem]{Proposition}
\newtheorem{lemma}[theorem]{Lemma}
\newtheorem{conjecture}[theorem]{Conjecture}
\theoremstyle{definition}
\newtheorem{definition}[theorem]{Definition}
\title{The resolution of Niho's last conjecture concerning sequences, codes, and Boolean functions}
\author{Tor Helleseth \thanks{T.~Helleseth and C.~Li are with the Department of Informatics, University of Bergen, Norway. The work of T.~Helleseth and C.~Li was supported by the Research Council of Norway (No.~247742/O70 and No.~311646/O70).
	The work of C.~Li was also supported in part by the National Natural Science Foundation of China under Grant (No.~61771021).} \and Daniel J.~Katz  \thanks{D.~J.~Katz is with the Department of Mathematics, California State University, Northridge, USA. This paper is based upon work of D.~J.~Katz supported in part by the National Science Foundation under Grants DMS-1500856 and CCF-1815487.} \and Chunlei Li$^{*}$ }
\date{08 July 2021}
\begin{document}
\maketitle
\begin{abstract}
A new method is used to resolve a long-standing conjecture of Niho concerning the crosscorrelation spectrum of a pair of maximum length linear recursive sequences of length $2^{2 m}-1$ with relative decimation $d=2^{m+2}-3$, where $m$ is even.  The result indicates that there are at most five distinct crosscorrelation values.  Equivalently, the result indicates that there are at most five distinct values in the Walsh spectrum of the power permutation $f(x)=x^d$ over a finite field of order $2^{2 m}$ and at most five distinct nonzero weights in the cyclic code of length $2^{2 m}-1$ with two primitive nonzeros $\alpha$ and $\alpha^d$.  The method used to obtain this result proves constraints on the number of roots that certain seventh degree polynomials can have on the unit circle of a finite field.  The method also works when $m$ is odd, in which case the associated crosscorrelation and Walsh spectra have at most six distinct values.
\end{abstract}
\section{Introduction}
Binary maximum length linear recursive sequences, or m-sequences for short, are widely employed in navigation, radar, and spread-spectrum communication systems because of their good autocorrelation and crosscorrelation properties.
In this paper $\F_q$ denotes a finite field of order $q$, and if $F$ is a field, then $\Fu$ denotes the group of units of $F$.
Let $n$ and $d$ be two positive integers with $\gcd(2^n-1, d)=1$.
It was already known to Niho \cite[pp.~15--20]{Niho} that the study of the value distribution of the crosscorrelation function between two binary m-sequences of length $2^n-1$ with decimation $d$ is equivalent to the study of the weight distribution of the cyclic codes of length $2^n-1$ with two nonzeros $\alpha, \alpha^d$, where $\alpha$ is a primitive element of $\Ftwotothen$.
Furthermore, although Niho does not explicitly mention Walsh spectra in his thesis, he writes his results on crosscorrelation in terms of the quantity $\Delta_d$ whose formula \cite[p.~2]{Niho} is immediately recognizable as that of the Walsh transform of a Boolean function of the form $x\mapsto \Tr_{\Ftwotothen/\Ftwo}(x^d)$.\footnote{Niho actually uses $r$ where we use $d$.}
The Walsh transform measures the nonlinearity of the component functions of the power permutation $x\mapsto x^d$ over $\Ftwotothen$, so it is of interest in measuring the resistance to linear attacks on cryptographic systems employing this permutation.
For an explicit recognition that all three of these information-theoretic questions constitute the same mathematical problem see \cite[p.~613]{Dobbertin-Felke-Helleseth-Rosendahl}, and for another equivalent problem in finite projective geometry see \cite{Games-Sequences, Games-Quadrics}.
The appendix of \cite{Katz-2012} contains all proofs of the equivalences, with the linking mathematical object being the Weil sum of a binomial, which we describe next.

\begin{definition}[Weil sum $W_{F,d}(a)$]\label{Victor}
Let $F$ be a finite field of characteristic $p$ and order $p^n$, let $\Tr\colon F \to \Fp$ denote the absolute trace $\Tr(x)=x+x^p+\cdots+x^{p^{n-1}}$, let $\psi_F\colon F \to \C$ denote the canonical additive character $\psi_F(x)=\exp(2\pi i \Tr(x)/p)$, and let $d$ be a positive integer with $\gcd(d,p^n-1)=1$.
Then for each $a \in F$, we define the Weil sum
\[
W_{F,d}(a)=\sum_{x \in F} \psi_F(x^d- a x),
\]
with the binomial $x^d- a x$ as its argument.
\end{definition}
From the values of the Weil sum $W_{F,d}(a)$, one can determine the following. 
\begin{itemize}
\item The crosscorrelation spectrum for a pair of m-sequences of length $p^n-1$ with relative decimation $d$ is given by the collection of values $W_{F,d}(a)-1$ as $a$ runs through $\Fu$.
\item The Walsh spectrum of the power permutation $x\mapsto x^d$ over $F$ is given by the collection of values $W_{F,d}(a)$ as $a$ runs through $F$.  One should note that $W_{F,d}(0)=0$ invariably, so one can deduce the Walsh spectrum from the crosscorrelation spectrum, and vice versa.  The Walsh spectrum determines the nonlinearity $NL_{F,d}$ of the Boolean function $x\mapsto \Tr(x^d)$ via the relation $NL_{F,d} = (|F| - \max_{a\in F}|W_{F,d}(a)|)/2$.  The nonlinearity assesses a Boolean function's resistance against linear attacks in cryptographic applications.
\item When $d \equiv 1\pmod{p-1}$ and $d$ is not a power of $p$ modulo $p^n-1$, the cyclic code $C$ of length $p^n-1$ with nonzeros at $\alpha$ (a primitive element of $F$) and $\alpha^d$ has the zero word and $2(p^n-1)$ words of weight $(p-1) p^{n-1}$ from the two simplex codes (one with nonzero $\alpha$ and one with nonzero $\alpha^d$) that lie within $C$, and for each $a \in \Fu$, there are $p^n-1$ additional words of weight $(p-1) p^{n-1}-(p-1) W_{F,d}(a)/p$ in $C$.
\end{itemize}
\noindent One can also determine the autocorrelation and crosscorrelation spectrum for a family of Gold sequences from the crosscorrelation spectrum of the pair of m-sequences we just described; this works both for those particular classes of m-sequences used in Gold's original construction \cite{Gold} and, more generally, for his construction applied to any pair of m-sequences \cite[Section 7.3]{Katz-2019}.
In applications it is of particular interest to find positive integers $d$ that lead to Walsh spectra or crosscorrelation spectra consisting of a few values whose absolute values are small \cite{Niho, Helleseth-1976, Helleseth-1978, Helleseth-Kumar, Canteaut-Charpin-Dobbertin, Hollmann-Xiang}, since Boolean functions with high nonlinearity are resistant to linear attack and sequences pairs with low crosscorrelation are easily distinguishable.
We say that an exponent $d$ is {\it degenerate over $F=\mathbb{F}_{p^n}$} when it is a power of $p$ modulo $p^n-1$; in this case $\Tr(x^d)=\Tr(x)$ and our power permutation is linear and our decimated m-sequence is the same as the original.
In this case the crosscorrelation spectrum degenerates to the autocorrelation spectrum of an m-sequence, which has the value $p^n-1$ at shift $0$ and the value $-1$ at all other shifts; the corresponding Walsh spectrum has a single instance of $p^n$ and all other values equal to $0$.
The first author in \cite[Theorem 4.1]{Helleseth-1976} showed that the crosscorrelation spectrum for two binary m-sequences of length $2^n-1$ with decimation $d$ has at least three values if $d$ is not degenerate.
When $n$ is even, say $n=2 m$, so that $F=\Fptothen$ is the quadratic extension of the field $\Fptothem$, we say that the exponent $d$ is a {\it Niho-type exponent over $F$} if it is degenerate over $\Fptothem$ (i.e., a power of $p$ modulo $p^m-1$) but nondegenerate over $F=\Fptothen$ (i.e., not a power of $p$ modulo $p^n-1=p^{2 m}-1$).
Research has shown that exponents $d$ of Niho-type over $F$ are of great importance in generating few-valued crosscorrelation spectra of m-sequences \cite{Niho} and in constructing other interesting objects, such as (vectorial) bent functions and permutations in cryptography \cite{Li-Zeng}.

If $e \equiv p^k d \pmod{p^n-1}$ for some integer $k$, then the exponent $e$ produces the same crosscorrelation spectrum and the same Walsh spectrum as exponent $d$; see \cite[Theorem 2.4]{Trachtenberg}, \cite[Theorem 3.1(d)]{Helleseth-1976}, \cite[Section 1]{Cusick-Dobbertin}, and \cite[Lemma 3.2]{Aubry-Katz-Langevin}.  Therefore, up to this equivalence one can write a Niho exponent over $F=\Fptothen=\Fptothetwom$ as $s(p^m-1)+1=s(\sqrt{\card{F}}-1)+1$ with $s>1$.\footnote{To make $\gcd(d,p^n-1)=1$, it is necessary and sufficient that $s$ satisfy $\gcd(2 s-1,p^m+1)=1$.  For $p=2$ and $s=2$, this happens if and only if $m$ is even.  For $p=2$ and $s=3$, this happens if and only if $m\not\equiv 2 \pmod{4}$.  For $p=2$ and $s=4$, this happens for all $m$.}
The crosscorrelation spectrum of two binary m-sequences with Niho-type decimations for the case $s=2$ is relatively simple and was settled by Niho in his doctoral thesis \cite[Theorem 3-6]{Niho}.
Niho also showed that the crosscorrelation function for binary m-sequences takes at most six values for $s=3$ \cite[Theorem 3-9]{Niho} when $m$ is odd and takes at most eight values for $s=4$ \cite[Theorem 3-10]{Niho}.
Based on numerical results, he further conjectured that the crosscorrelation can actually take at most five values for $s=4$ when $m$ is even \cite[Conjecture 4-6(5)]{Niho}.
By 2006, Dobbertin et al.~had made significant progress in determining the crosscorrelation spectrum of binary m-sequences for Niho-type decimations with $s=3$ via Dickson polynomials and Kloosterman sums \cite{Dobbertin-Felke-Helleseth-Rosendahl}; and recently Xia et al.~completely determined the value distribution for $s=3$ with arbitrary $m$ (even or odd) via a connection with the binary Zetterberg codes \cite[Theorem 2]{Xia-Li-Zeng-Helleseth}.
For Niho-type decimations with $s>4$, the crosscorrelation spectrum contains at most $2 s$ distinct values (cf.~Lemma \ref{William}).
Numerical results for small integers $s>4$ and small values of $m$ (i.e., in small fields) show spectra with significantly fewer than $2 s$ distinct values, but with a tendency to include more values as $m$ and $s$ increase.

In this paper, we shall look into the conjecture on crosscorrelation for the Niho-type decimation with $s=4$ and even $m$ \cite[Conjecture 4-6(5)]{Niho}, which is the final conjecture of Niho's thesis and has remained an open question for almost half a century.  We state Niho's conjecture in the notation of this paper.
\begin{conjecture}[Niho, 1972]
Let $F$ be a finite field of order $4^m$ where $m$ is even, let $d=4(\sqrt{\card{F}}-1)+1$, and let $W_{F,d}(a)$ be the Weil sum from Definition \ref{Victor}.  Then $\{W_{F,d}(a): a \in \Fu\}$ contains at most five distinct values.
\end{conjecture}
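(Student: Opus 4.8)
\section*{Proof proposal}

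The plan is to convert the Weil sum into a count of roots of a single family of degree-seven polynomials on the unit circle of $F$, and then to force that count down to five possibilities by exploiting the conjugate-reciprocal symmetry of those polynomials. Set $q=\sqrt{\card F}=2^m$ and let $U=\{u\in\Fu:u^{q+1}=1\}$. Writing each $x\in\Fu$ uniquely as $x=\lambda u$ with $\lambda\in\F_q^*$ and $u\in U$, and using $d\equiv 4q-3\pmod{q^2-1}$ (whence $\lambda^d=\lambda$ and $u^d=u^{-7}$), one sums over $\lambda$ first, clears powers of $u$, and substitutes $w=u^2$ (legitimate since $q+1$ is odd) to get
\[
W_{F,d}(a)=q\bigl(N(a)-1\bigr),\qquad N(a)=\#\{w\in U:P_a(w)=0\},\quad P_a(w)=w^7+aw^4+\ac aw^3+1,
\]
where $\ac a=a^q$; this is the $s=4$ instance of the reduction behind Lemma \ref{William}. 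It therefore suffices to prove $N(a)\in\{1,2,3,5,7\}$ for every $a\in\Fu$, since then $W_{F,d}(a)\in\{0,q,2q,4q,6q\}$.

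First handle $a\in U$, where $\ac a=a^{-1}$. Let $c$ be the unique fourth root of $\ac a$ in $F$; substituting $w=cv$ factors $P_a$ completely as $c^{-4}(w+c)^4(c^4w^3+1)$. One checks that $c\in U$, and since $m$ is even one has $3\mid q-1$ and $3\nmid q+1$, so $w^3=a$ has exactly one root on $U$. Hence the roots of $P_a$ on $U$ are $c$ and that cube root, and they coincide only when $a=1$ (equality would force $c^7=1$, impossible for $c\in U\setminus\{1\}$ because $\gcd(7,q+1)=1$). Therefore $N(a)=2$ for $a\in U\setminus\{1\}$, while $P_1=(w+1)^5(w^2+w+1)$ gives $N(1)=1$.

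Now take $a\notin U$; then $\gcd(P_a,P_a')=1$ (as $P_a'=w^2(w^4+\ac a)$ and $P_a(c)\ne 0$ for $c$ the fourth root of $\ac a$), so $P_a$ is separable. Since $P_a$ is conjugate-reciprocal ($w^7\ac{P_a}(1/w)=P_a(w)$), its monic irreducible factors over $\F_{q^2}$ that are not self-conjugate-reciprocal occur in conjugate-reciprocal pairs of even total degree, with all roots off $U$; the self-conjugate-reciprocal ones of degree $\ge 2$ also have all roots off $U$ (any root of a factor of degree $\ge 2$ lies outside $F\supseteq U$); and the self-conjugate-reciprocal linear factors are exactly $(w-u)$ with $u\in U$, accounting for the $N(a)$ roots on $U$. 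Hence $7-N(a)$ is an even number plus the total degree of the self-conjugate-reciprocal factors of degree $\ge 2$, so $N(a)$ is odd as soon as $P_a$ has no self-conjugate-reciprocal irreducible factor of odd degree $\ge 3$. Granting this (next paragraph), $N(a)\in\{1,3,5,7\}$; together with the previous paragraph, $N(a)\in\{1,2,3,5,7\}$ for all $a\in\Fu$.

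It remains to rule out self-conjugate-reciprocal irreducible factors of $P_a$ of odd degree $e\ge 3$. Such a factor $g$ over $\F_{q^2}$ has all its roots in the group $\mu_{q^e+1}$ of $(q^e+1)$-th roots of unity: stability of its root set under $\iota\colon w\mapsto w^{-q}$ forces a root $r$ to obey $r^{-q}=r^{q^{2j}}$ for some $j$ with $0<j<e$, and for odd $e$ this compels $j=(e+1)/2$, giving $r^{q^e+1}=1$. Now apply $x\mapsto x^{q^e}$ to $P_a(r)=0$: since $r^{q^e}=r^{-1}$ and, $e$ being odd, $a^{q^e}=\ac a$, the relation becomes $\ac{P_a}(r)=0$, so $P_a$ and $\ac{P_a}$ share the root $r$. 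But $P_a+\ac{P_a}=(a+\ac a)\,w^3(w+1)$, which for $a\notin\F_q$ vanishes only at $w\in\{0,1\}$, and $P_a$ vanishes at neither — so no such factor exists once $a\notin\F_q$. The remaining case $a\in\F_q\setminus\{1\}$ is handled by hand: there $P_a=(w+1)\,h(w)$ with $h$ self-reciprocal of degree $6$, and $h(w)=w^3\tilde h(w+w^{-1})$ with $\tilde h(y)=y^3+y^2+(a+1)$; each root $y\in\F_q$ of $\tilde h$ contributes $0$ or $2$ roots of $h$ on $U$ according to the value of $\Tr_{\F_q/\Ftwo}(1/y)$, so again $N(a)=1+2k$ is odd. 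I expect this last step — proving that these specific degree-seven polynomials cannot acquire an odd-degree self-conjugate-reciprocal factor without degenerating, which is precisely a constraint on how their roots distribute on the unit circle of the field — to be the main obstacle; everything else is bookkeeping organized around it.
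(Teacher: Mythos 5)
Your reduction of the Weil sum to counting unit-circle roots of $P_a(w)=w^7+aw^4+\ac{a}w^3+1$ is the same as the paper's Lemma \ref{William}, and your analysis of the inseparable case $a\in U_F$ agrees with Lemma \ref{Cecilia}. The proof breaks at the step where you exclude self-conjugate-reciprocal irreducible factors of odd degree $e\ge 3$. From $r^{q^e}=r^{-1}$ and $P_a(r)=0$, applying $x\mapsto x^{q^e}$ (with $e$ odd) gives $r^{-7}+\ac{a}r^{-4}+ar^{-3}+1=0$, i.e.\ $\ac{P_a}(1/r)=0$, \emph{not} $\ac{P_a}(r)=0$; and by the identity $w^7\ac{P_a}(1/w)=P_a(w)$ --- the very self-conjugate-reciprocity you invoke earlier --- this is just the hypothesis $P_a(r)=0$ restated. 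The argument is circular, so the intended contradiction with $P_a+\ac{P_a}=(a+\ac{a})w^3(w+1)$ never materializes. Worse, the conclusion you are after is false: roots off $U_F$ can lie in $\Pi_F$-orbits of odd size $3$ or $5$ (they then lie on $U_E$ for the cubic or quintic extension $E$ of $F$, cf.\ Lemma \ref{David}), and these are exactly the odd-degree self-conjugate-reciprocal factors you tried to rule out. Your parity claim would force $Z(a)\ge 1$, i.e.\ $W_{F,d}(a)\ge 0$ for every $a$, whereas a nondegenerate Weil sum always attains a strictly negative value; here that value is $-\sqrt{\card{F}}$, corresponding to $Z(a)=0$ and realized precisely by the orbit partitions $2+5$ or $3+4$ of Proposition \ref{Joseph}, and it is one of the five values in Theorem \ref{George}(i). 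So your target set $\{0,q,2q,4q,6q\}$ (equivalently $Z\in\{1,2,3,5,7\}$) is not the right one: the correct containment is $Z\in\{0,1,2,3,5\}$, i.e.\ $W/\sqrt{\card{F}}\in\{-1,0,1,2,4\}$, and in particular $Z=7$ is impossible while $Z=0$ is not. (Your side computation for $a\in\F_q\setminus\{1\}$ via $\tilde h(y)=y^3+y^2+(a+1)$ is fine, but it covers only that thin family of $a$.)

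The gap is precisely where the paper's real work lies, and it is not repairable by elementary symmetry bookkeeping of the kind you use: since odd orbits off $U_F$ cannot be excluded, the paper instead proves the much subtler statement that the \emph{number} of $\Pi_F$-orbits of the seven roots is even (Lemma \ref{Ernest}). This rests on the computer-assisted identity of Lemma \ref{Lawrence} --- the vanishing of $\sum_{\{u,v\}\subseteq R} uv/(u-v)^2$ over the root set, proved by expanding a related symmetric polynomial in elementary symmetric functions and observing that every one of its $218$ terms involves $\sigma_1,\sigma_2,\sigma_5$ or $\sigma_6$, which vanish for the key polynomial --- combined with the trace computation of Proposition \ref{Timothy}. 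That evenness constraint rules out exactly $4$, $6$ and $7$ unit-circle roots and yields Theorem \ref{Mary}; nothing in your outline supplies a substitute for this step.
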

In this paper we verify this conjecture.  In fact, we prove the following.
\begin{theorem}\label{George}
Let $F$ be a finite field of order $4^m$, let $d=4(\sqrt{\card{F}}-1)+1$, let $W_{F,d}(a)$ be the Weil sum from Definition \ref{Victor}, and let $\cW=\{W_{F,d}(a)/\sqrt{\card{F}}: a \in \Fu\}$.
\begin{enumerate}[(i).]
\item If $m$ is even, then $\cW \subseteq \{-1,0,1,2,4\}$.
\item If $m$ is odd and greater than $1$, then $\cW \subseteq \{-1,0,1,2,3,4\}$.
\item If $F=\Ffour$, then $d$ is degenerate over $F$ and $\cW=\{0,2\}$.
\end{enumerate}
\end{theorem}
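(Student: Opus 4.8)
\emph{Reduction to roots on the unit circle.}
Write $q=\sqrt{\card F}=2^m$, so that $F=\F_{q^2}$, $d=4(q-1)+1$, and $\ac a:=a^q$ denotes the conjugate of $a\in F$ over the subfield $\F_q$; let $U=\{u\in F:u^{q+1}=1\}$ be the unit circle, a cyclic group of odd order $q+1$. I would begin with the standard Niho‑type reduction: for $x\ne0$ we have $x^d=x\,(x^{q-1})^{4}$, so grouping the sum defining $W_{F,d}(a)$ by the value $v=x^{q-1}\in U$ and evaluating the resulting additive‑character sum over the coset of $\F_q^*$ gives, after multiplying through by $v^{3}$, the clean identity
\[
W_{F,d}(a)=q\,\bigl(N(a)-1\bigr),\qquad N(a):=\#\{v\in U:\ p_a(v)=0\},\quad p_a(x):=x^{7}+\ac a\,x^{4}+a\,x^{3}+1 .
\]
Hence $\cW=\{N(a)-1:a\in\Fu\}$, and since $\sqrt{\card F}=q$ the theorem is equivalent to: $N(a)\le5$ for every $a\in\Fu$, and $N(a)\ne4$ whenever $m$ is even; together with the case $F=\Ffour$, where $d=5$ is a power of $2$ modulo $\card F-1=3$, hence degenerate, so $W_{F,d}(a)$ equals $\card F$ if $a=1$ and $0$ otherwise and $\cW=\{0,2\}$. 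So everything reduces to bounding the number of roots the seventh‑degree polynomial $p_a$ can have on the unit circle, and to forbidding the value $4$ when $m$ is even.

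\emph{The easy configurations.}
We have $p_a(1)=a+\ac a=\Tr_{F/\F_q}(a)$ and $p_a'(x)=x^{2}(x^{4}+a)$, so $p_a$ is separable exactly when $a\notin U$. First suppose $\Tr_{F/\F_q}(a)=0$ (in particular whenever $a\in\F_q$); then $p_a(x)=(x+1)\bigl(x^{6}+x^{5}+x^{4}+(1+\ac a)x^{3}+x^{2}+x+1\bigr)$ with a palindromic sextic cofactor, and the substitution $z=x+x^{-1}$ (for which $z\in\F_q$ whenever $x\in U$) collapses the sextic to the cubic $z^{3}+z^{2}+1+\ac a=0$. If $a\notin\F_q$ then $1+\ac a\notin\F_q$, so this cubic has no root in $\F_q$ and $N(a)=1$. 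If $a\in\F_q$ the cubic $z^{3}+z^{2}+1+a=0$ lives over $\F_q$, and a root $z$ of it lifts to a conjugate pair in $U\setminus\{1\}$ precisely when $\Tr_{\F_q/\Ftwo}(z^{-1})=1$; since the (at most three) roots $z_i$ of $z^{3}+z^{2}+1+a$ in $\F_q$ satisfy $\sum z_i^{-1}=0$, an even number of them meet that condition, so $N(a)\in\{1,3,5\}$. Next suppose $a\in U$ with $a\ne1$; then a direct computation gives $p_a(x)=(x+a^{1/4})^{4}(x^{3}+\ac a)$ with $a^{1/4}\in U$ not a root of $x^{3}+\ac a$, and counting the roots of $x^{3}+\ac a$ that land in $U$ (all three, exactly one, or none, according to the parity of $m$ and whether $\ac a$ is a cube in $F^*$) yields $N(a)\in\{1,2\}$ when $m$ is even and $N(a)\in\{1,4\}$ when $m$ is odd — the source of the extra value $3\in\cW$ for odd $m$. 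In all these configurations $N(a)\le5$, and $N(a)=4$ occurs only for odd $m$, so it remains only to treat the case $a\notin\F_q$, $\Tr_{F/\F_q}(a)\ne0$, $a\notin U$.

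\emph{The main case and the principal obstacle.}
In this case $1$ is not a root of $p_a$, the polynomial $p_a$ is separable, and $p_a$ and its conjugate $p_{\ac a}=\sigma(p_a)$ (where $\sigma$ generates $\Gal(F/\F_q)$) are coprime. The reciprocal of $p_a$ is exactly $p_{\ac a}$, so the map $\psi\colon v\mapsto v^{-q}$ permutes the seven roots of $p_a$, its square is the $q^{2}$‑power Frobenius, and its fixed points are precisely the roots lying in $U$. Since the nonlinear $\F_{q^{2}}$‑irreducible factors of $p_a$ have degree $\ge2$, the number $f_1$ of roots of $p_a$ in $F$ lies in $\{0,1,2,3,4,5,7\}$, and $\psi$ restricts to an involution on those $f_1$ roots, whence $N(a)\le f_1$ and $N(a)\equiv f_1\pmod2$. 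Consequently $N(a)\le5$ \emph{unless} $p_a(x)\mid x^{q+1}-1$ (the case $f_1=7$ with all roots in $U$), and $N(a)=4$ \emph{only if} $p_a$ is a product of four distinct linear factors with roots in $U$ and one $\F_{q^{2}}$‑irreducible cubic. Passing to the self‑reciprocal degree‑$14$ companion $g(x)=p_a(x)\,p_{\ac a}(x)\in\F_q[x]$ and writing $g(x)=x^{7}h(x+x^{-1})$, a Dickson‑polynomial computation produces the explicit ``trace polynomial''
\[
h(z)=z^{7}+z^{5}+s\,z^{4}+s\,z^{3}+(1+s+p)\,z+s^{2},\qquad s=\Tr_{F/\F_q}(a)\in\F_q^{*},\quad p=a^{q+1}\in\F_q\setminus\{1\},
\]
whose roots are the traces $v+v^{-1}$ of the roots $v$ of $p_a$. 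The two dangerous configurations then translate into: (I) it is impossible for all seven roots of $h$ to lie in $\F_q$ and satisfy $\Tr_{\F_q/\Ftwo}(z^{-1})=1$; and (II) when $m$ is even, $h$ cannot split off a quartic factor whose four roots lie in $\F_q$ with $\Tr_{\F_q/\Ftwo}(z^{-1})=1$ while the complementary cubic factor arises from an $\F_{q^{2}}$‑irreducible cubic divisor of $p_a$. I expect (I) and (II) — exactly the advertised constraints on seventh‑degree polynomials on a finite‑field unit circle — to be the crux and the main obstacle: one must play the rigid coefficient pattern $1,1,s,s,0,1+s+p,s^{2}$ of $h$ against the arithmetic of $\F_q$‑rationality of its roots and against the quadratic character data $\Tr_{\F_q/\Ftwo}(z^{-1})$, with the parity of $m$ entering through an $\F_2$‑valued (quadratic‑form‑type) invariant of the root configuration of $h$. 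Once (I) and (II) are in hand, (I) gives $N(a)\le5$, hence part (ii); (I) and (II) together give $N(a)\ne4$ for even $m$, hence part (i); and combining with the easy configurations and the $\Ffour$ computation proves Theorem \ref{George}.
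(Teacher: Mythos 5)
Your reduction to counting roots of $p_a$ on the unit circle is correct and is the same as the paper's Lemma \ref{William}; your analysis of the easy cases ($a\in\F_q^*$ and $a\in U_F$) matches the paper's treatment of the inseparable key polynomials (Lemma \ref{Cecilia}), and your Dickson computation of $h(z)$ checks out. But the argument stops exactly where the theorem's difficulty begins. The involution $v\mapsto v^{-q}$ on the roots in $F$ only gives $N(a)\le f_1$ and $N(a)\equiv f_1\pmod 2$, which leaves both dangerous configurations wide open: $N(a)=7$ (all seven roots on $U_F$) and $N(a)=4$ (four roots on $U_F$ plus an irreducible cubic over $F$). These are precisely your (I) and (II), and you do not prove them; you only formulate them and state the expectation that the ``rigid coefficient pattern'' of $h$ together with the trace conditions $\Tr_{\F_q/\F_2}(z^{-1})=1$ will rule them out. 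That is the entire content of Niho's conjecture in this setting, so as written the proposal has a genuine gap at its core. (A minor slip elsewhere: in characteristic $2$, $\Tr_{F/\F_q}(a)=0$ if and only if $a\in\F_q$, so your sub-case ``$\Tr_{F/\F_q}(a)=0$ with $a\notin\F_q$'' is vacuous; harmless, but it signals that the case split should be organized by $a\in\F_q$ versus $a\in U_F$ versus neither.)

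For comparison, the paper closes this gap by proving a constraint strictly stronger than your parity observation: for a separable key polynomial the seven roots split into an \emph{even} number of orbits under $x\mapsto x^{-q}$ (Lemma \ref{Ernest}), which simultaneously forbids seven singleton orbits ($N=7$), six roots on $U_F$, and the four-singletons-plus-one-triple configuration ($N=4$, five orbits), for every $m$. The evenness comes from showing that $S=\sum_{\{u,v\}\subseteq R}uv/(u-v)^2$ vanishes for the root set $R$ of a separable key polynomial (Lemma \ref{Lawrence}) --- a fact that depends on $g_{F,a}$ having no terms of degrees $6,5,2,1$ and is verified via an explicit $218$-term expansion of $b^2\sum_{i<j}x_ix_j/(x_i-x_j)^2$ in elementary symmetric polynomials --- combined with the trace identity $\Tr_{H_F/\F_2}(S)=\binom{\card{R}+1}{2}+t\pmod 2$, where $t$ is the number of orbits (Proposition \ref{Timothy}), and then the case analysis of Proposition \ref{Joseph}. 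Nothing in your sketch supplies an input of comparable strength, and it is not evident that juggling the coefficients of $h$ and the $\F_q$-rationality/trace data of its roots can be turned into a proof of (I) and (II) without some such identity; until those two exclusions are established, parts (i) and (ii) of the theorem remain unproved (part (iii) and the easy cases are fine).
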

From this one can determine the possible values in the associated crosscorrelation and Walsh spectra, as well as the weights in the associated cyclic code as described in the list after Definition \ref{Victor}.
Notice that our theorem works both when $m$ is even and when $m$ is odd, and shows that the crosscorrelation actually takes at most five values when $m$ is even and at most six values when $m$ is odd.

This paper is organized as follows.
Section \ref{Raphael} shows that Theorem \ref{George} is equivalent to a problem of counting how many zeros of certain polynomials (called {\it key polynomials}) lie in particular subsets of finite fields (called {\it unit circles}).
Then Section \ref{Herbert} investigates a general symmetry property shared by the key polynomials and shows that a certain transformation (called the {\it conjugate-reciprocal map}) permutes the roots of polynomials with this symmetry.
Section \ref{Robert} describes the action of the group generated by the conjugate-reciprocal map, examines the orbits under this action, and calculates certain sums of symmetric rational functions associated with these orbits.
Section \ref{Katherine} then focuses specifically on the key polynomials to give constraints on how many zeros they may have on unit circles, and uses the sums from Section \ref{Robert} to complete the proof of Theorem \ref{George}.
In the development of our proof we shall formulate the problem in arbitrary characteristic and add the restriction to characteristic $2$ when needed.

\section{An Equivalent Zero-Counting Problem}\label{Raphael}

In this section, we show that proving Theorem \ref{George} is equivalent to proving a result (Theorem \ref{Mary} below) about the number of roots of a family of polynomials on the so-called unit circle of a finite field.
To state the result, we first need a few notational conventions and definitions.
If $F$ is a subfield of $E$, we write $[E:F]$ to denote the degree of $E$ over $F$, so that $\card{E}=\card{F}^{[E:F]}$.
We consider all finite fields of a given characteristic $p$ to lie in a unique algebraic closure of $\Fp$.
For any finite field $F$, we write $\acF$ for this algebraic closure, so if $F$ is of characteristic $p$, then $\acF=\acFp$.
\begin{definition}[Half field]\label{Henry}
Let $F$ be a finite field that is an even degree extension of its prime subfield $\Fp$.  Then the {\it half field of $F$}, denoted $H_F$ is unique subfield of $F$ with $[F:H_F]=2$.
\end{definition}
That is, the half field $H_F$ is the unique subfield of $F$ with cardinality $\sqrt{\card{F}}$.
\begin{definition}[Conjugation map]
Let $F$ be a finite field that is an even degree extension of its prime subfield $\Fp$. Then the {\it conjugation map for $F$}, denoted $\tau_F\colon \acF \to \acF$, is the map with $\tau_F(x)=x^{\card{H_F}}$ for every $x \in \acF$.
\end{definition}
If $\tau_F$ is restricted to $F$, one obtains the unique generator of the Galois group $\Gal(F/H_F)$, which is a cyclic group of order $2$.
Note that if $E$ is an extension of $F$, then $\tau_E=\tau_F^{[E:F]}$.
\begin{definition}[Conjugate-reciprocal map]\label{Clarence}
Let $F$ be a finite field that is an even degree extension of its prime subfield $\Fp$. Then the {\it conjugate-reciprocal map for $F$}, denoted $\pi_F\colon \acFu \to \acFu$, is the map with $\pi_F(x)=x^{-\card{H_F}}$ for every $x \in \acFu$.
\end{definition}
The name of $\pi_F$ comes from the fact that $\pi_F(x)=\tau_F(1/x)=1/\tau_F(x)$ for every $x \in \acFu$.
We note that $\pi_F^2=\tau_F^2$.
Therefore, if $E$ is an odd degree extension of $F$, then $\pi_E=\pi_F^{[E:F]}$, but if $E$ is an even degree extension of $F$, then $\pi_E(x)=1/\pi_F^{[E:F]}(x)$ for every $x \in \acFu$.
In particular, if $r \in \acFu$ with $e=[F(r):F]$, then $\pi_F^{2 e}(r)=\tau_F^{2 e}(r)=\tau_{F(r)}^2(r)=r$.  This shows that $\pi_F$ is a permutation of $\acFu$, and so we can write $\pi_F^k$ for both positive and negative integers $k$.
\begin{definition}[Unit circle]\label{Ursula}
Let $F$ be a finite field that is an even degree extension of its prime subfield $\Fp$.  Then the {\it unit circle of $F$}, denoted $U_F$, is the set $\{u \in \acF: u\tau_F(u)=1\}$.
\end{definition}
Note that $U_F$ is the cyclic group of order $\card{H_F}+1$ in $\acF$, and $U_F$ is in fact a subgroup of the cyclic group $\Fu$ since $\card{H_F}+1=\sqrt{\card{F}}+1$ is a divisor of $\card{F}-1$.
Equivalent definitions of $U_F$ include $\{u \in \acFu: \pi_F(u)=u\}$, $\{u \in F: u\tau_F(u)=1\}$, and $\{u \in \Fu: \pi_F(u)=u\}$.
Note that if $E$ is an odd degree extension of $F$, then $U_F \subseteq U_E$.
If $E$ is an even degree extension of $F$, and these fields are of characteristic $2$, then one can show that $U_F\cap U_E=\{1\}$ because $\gcd(\card{H_E}+1,\card{H_F}+1)=1$.

The proof of the equivalence between Theorem \ref{George} and a zero-counting problem goes back to Niho's thesis: see Theorem 3-5 and its proof in \cite{Niho}.
A generalization of Niho's result was stated in \cite[Theorem 2]{Rosendahl}; we now state and prove a corrected\footnote{When $p=3$, $n=2$, and $d=5$, \cite[Theorem 2]{Rosendahl} asserts that $C_5(1)=5$ ($W_{\F_9,5}(1)=6$ in our notation) but a direct calculation shows that $C_5(1)=2$ ($W_{\F_9,5}(1)=3$ in our notation).} version.
\begin{lemma}\label{William}
Let $F$ be a finite field that is an even degree extension of its prime subfield, let $s$ be a nonnegative integer, let $d=s(\sqrt{\card{F}}-1)+1$, let $W_{F,d}(a)$ be the Weil sum from Definition \ref{Victor}, and for each $a \in F$ let $Z(a)$ be the number of distinct zeros of $x^{2s-1}-a x^s-\tau_F(a) x^{s-1} + 1$ that lie on $U_F$.
Then $W_{F,d}(a)=(Z(a)-1)\sqrt{\card{F}}$ for each $a \in F$.
\end{lemma}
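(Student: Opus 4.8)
\emph{Proof strategy.} Write $q=\sqrt{\card F}=\card{H_F}$, so that $\card F=q^2$ and $d=s(q-1)+1$. The plan is to run the classical Niho decomposition of $\Fu$. Since $\Fu$ is cyclic of order $q^2-1=(q-1)(q+1)$, the homomorphism $x\mapsto x^{q-1}$ maps $\Fu$ onto the unique subgroup of order $q+1$, which is precisely $U_F$; its kernel is $H_F^*$ (the $(q-1)$ solutions of $t^{q-1}=1$), so every fibre of this map is a coset of $H_F^*$ and has exactly $q-1$ elements. First I would split off the term $x=0$, which contributes $\psi_F(0)=1$. For $x\in\Fu$, setting $u=x^{q-1}\in U_F$ gives $x^d=x\cdot(x^{q-1})^s=xu^s$, hence $x^d-ax=x(u^s-a)$. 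Grouping the remaining sum according to the value of $u$ yields
\[
W_{F,d}(a)=1+\sum_{u\in U_F}\ \sum_{\substack{x\in\Fu\\ x^{q-1}=u}}\psi_F\bigl(x(u^s-a)\bigr).
\]

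Next I would evaluate the inner sum for a fixed $u\in U_F$. Fixing any $x_0\in\Fu$ with $x_0^{q-1}=u$, the fibre is $x_0 H_F^*$, and because $\psi_F=\psi_{H_F}\circ\Tr_{F/H_F}$ while $\Tr_{F/H_F}$ is $H_F$-linear, the inner sum equals $\sum_{t\in H_F^*}\psi_{H_F}\bigl(t\,\Tr_{F/H_F}(x_0(u^s-a))\bigr)$. This is $q-1$ when $\Tr_{F/H_F}(x_0(u^s-a))=0$ and $-1$ otherwise, since summing the additive character of $H_F$ against the nonzero multiples of a fixed nonzero element gives $-1$. The key computation is to recognize when this relative trace vanishes: expanding $\Tr_{F/H_F}(z)=z+z^q$ with $z=x_0(u^s-a)$ and using $x_0^q=ux_0$ together with $u^q=u^{-1}$ and $a^q=\tau_F(a)$ (the latter two valid because $u\in U_F$), one obtains $\Tr_{F/H_F}(x_0(u^s-a))=x_0\bigl(u^s+u^{1-s}-a-u\tau_F(a)\bigr)$. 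As $x_0\neq 0$ and $u\neq 0$, multiplying the bracket by $u^{s-1}$ shows this vanishes exactly when $u^{2s-1}-au^s-\tau_F(a)u^{s-1}+1=0$, i.e.\ exactly when $u$ is a zero of the polynomial in the statement.

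Combining these, with $Z(a)$ the number of $u\in U_F$ at which the polynomial vanishes,
\[
W_{F,d}(a)=1+Z(a)(q-1)-\bigl(\card{U_F}-Z(a)\bigr)=1+Z(a)(q-1)-(q+1-Z(a))=q\,(Z(a)-1),
\]
which is the asserted identity. I do not expect a genuine obstacle: this is a direct character-sum evaluation once the fibration of $\Fu$ over $U_F$ is set up, and the trace manipulation producing the key polynomial is the only place demanding attention. Two minor points need care. The degenerate case $s=0$ (where the exponent $2s-1$ is negative) should be handled by clearing denominators—multiplying by $x^{s}$, here $x$—and checking the count directly; note in particular that for $s=0$ and $a=1$ the polynomial vanishes on all of $U_F$, consistent with $W_{F,1}(1)=\card F$. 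And the multiplicity bookkeeping is clean: each $u\in U_F$ is weighted only according to whether the polynomial vanishes at $u$ as a point, so repeated roots are irrelevant, matching the phrase ``distinct zeros.''
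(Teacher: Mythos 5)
Your proof is correct and follows essentially the same route as the paper: with $q=\sqrt{\card{F}}$, decompose $\Fu$ into the $q+1$ fibres of $x\mapsto x^{q-1}$ over $U_F$ (the paper equivalently uses the coset representatives $\alpha^0,\dots,\alpha^q$ of $H_F^*$ and the bijection $y\mapsto y^{-(q-1)}$ onto $U_F$), evaluate each inner character sum over a coset of $H_F^*$ as $q-1$ or $-1$ according to whether a relative trace vanishes, and identify that vanishing condition with the key polynomial. One small point: multiplying your bracket $u^s+u^{1-s}-a-u\tau_F(a)$ by $u^{s-1}$ literally yields $u^{2s-1}-\tau_F(a)u^s-au^{s-1}+1$, the conjugate-reciprocal of the stated polynomial rather than the stated polynomial itself; this is harmless because applying $\tau_F$ and multiplying by $u^{2s-1}$ (using $u^q=u^{-1}$ for $u\in U_F$ and $\tau_F^2=\mathrm{id}$ on $F$) shows the two vanishing conditions are equivalent on $U_F$, but that one-line justification should be added.
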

\begin{proof}
Let $\alpha$ be a primitive element of $F$, let $q=\card{H_F}=\sqrt{\card{F}}$, and let $Y=\{\alpha^0,\alpha^1,\ldots,\alpha^q\}$.
Then each element of $\Fu$ is uniquely represented as $h y$ for some $h \in H_F^*$ and $y \in Y$, and $h^d=h$ for every $h \in H_F$, so we can write our Weil sum as $W_{F,d}(a) = 1 + \sum_{y \in Y} \sum_{h \in H_F^*} \psi_F((h y)^d - a h y) = 1-\card{Y} + \sum_{y \in Y} \sum_{h \in H_F} \psi_{H_F} (h \Tr_{F/H_F}(y^d-a y)) = q (N(a)-1)$, where $N(a)$ is the number of $y \in Y$ with $\Tr_{F/H_F}(y^d-a y)=0$.
Now note that $\Tr_{F/H_F}(y^d-a y)=y^d-a y+\tau_F(y^d-a y)=y^d-a y+y^{q d}-\tau_F(a) y^q$.  Since $Y \subseteq \Fu$ and $y^{q d}=y^{s(q^2-q)+q}=y^{-(s-1)(q-1)+1}$ for any $y \in \Fu$, our $N(a)$ counts the number of $y \in Y$ with $y^{-(s-1)(q-1)+1} -a y - \tau_F(a) y^{(q-1)+1} + y^{s(q-1)+1}=0$, which (by dividing by $y^{s(q-1)+1}$) is the same as the number of $y \in Y$ with $y^{-(2 s-1)(q-1)}-a y^{-s(q-1)} - \tau_F(a) y^{-(s-1)(q-1)} + 1 =0$.  The power function $y \mapsto y^{-(q-1)}$ maps $Y$ bijectively to $U_F$, so $N(a)$ counts the number of $x \in U_F$ such that $x^{2 s-1} -a x^s -\tau_F(a) x^{s-1} +1=0$, i.e., $N(a)=Z(a)$.
\end{proof}
This lemma also shows that as $a$ runs through $\Fu$, the number of polynomials $x^{2 s-1}-a x^s -\tau_F(a) x+1$ that have $r$ distinct roots on $U_F$ is the same as the number of times $W_{F,d}(a)$ assumes the value $(r-1)\sqrt{\card{F}}$.

Now Lemma \ref{William} shows that proving Theorem \ref{George} is equivalent to proving the following result.
\begin{theorem}\label{Mary}
Let $F$ be a finite field that is an extension of $\Ffour$, let $d=4(\sqrt{\card{F}}-1)+1$, and for each $a \in \Fu$ let $g_{F,a}(x)=x^7-a x^4-\tau_F(a) x^3 + 1$ and let $Z(a)$ be the number of distinct roots of $g_{F,a}(x)$ that lie in $U_F$.  Let $\cZ=\{Z(a): a \in \Fu\}$.
\begin{enumerate}[(i).]
\item If $[F:\Ffour]$ is even, then $\cZ \subseteq \{0,1,2,3,5\}$.
\item If $[F:\Ffour]$ is odd and greater than $1$, then $\cZ \subseteq \{0,1,2,3,4,5\}$.
\item If $F=\Ffour$, then $d$ is degenerate over $F$ and $\cZ=\{1,3\}$.
\end{enumerate}
\end{theorem}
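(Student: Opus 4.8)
\emph{Strategy, and the case $F=\Ffour$.} The plan is to prove Theorem~\ref{Mary} directly, building on the conjugate-reciprocal symmetry of the key polynomials. A short computation — using that $\tau_F$ fixes $\F_2$ and squares to the identity on $F$ — gives $x^7\,\tau_F\!\bigl(g_{F,a}(1/x)\bigr)=g_{F,a}(x)$, so $g_{F,a}$ has the symmetry studied in Section~\ref{Herbert}; consequently the conjugate-reciprocal map $\pi_F$ permutes the roots of $g_{F,a}$ in $\acF$, and the roots it fixes are precisely the ones lying in $U_F$, which is exactly what $Z(a)$ counts. Part~(iii) is then settled by a finite computation: for $F=\Ffour$ one has $q=2$, $U_F=\Ffour^{*}$, and $d=5$ is a power of $2$ modulo $3$; evaluating $g_{F,a}$ on $\Ffour^{*}$ gives $Z(1)=3$ and $Z(a)=1$ for the other two values of $a$, so $\cZ=\{1,3\}$.

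\emph{Reducing to the separable case.} For parts (i) and (ii) I would first strip off the inseparable behaviour. In characteristic $2$ we have $g_{F,a}'(x)=x^2\bigl(x^4+\tau_F(a)\bigr)$, so a repeated root of $g_{F,a}$ can only be the unique $\beta$ with $\beta^4=\tau_F(a)$; and since $g_{F,a}(\beta)=a\tau_F(a)+1$, this occurs if and only if $a\in U_F$, in which case $g_{F,a}(x)=(x+\beta)^4(x^3+a)$. In that factorization $\beta$ always lies in $U_F$ (it is the unique fourth root of $\tau_F(a)\in U_F$, and $U_F$ has odd order), while the number of roots of $x^3+a$ in $U_F$ is $3$ or $0$ when $[F:\Ffour]$ is odd (then $\Ffour^{*}\subseteq U_F$) and at most $1$ when $[F:\Ffour]$ is even (then $\gcd(3,|U_F|)=1$); hence $Z(a)\le 4$, respectively $Z(a)\le 2$, whenever $a\in U_F$. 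Therefore the only values of $a$ that could produce $Z(a)\in\{6,7\}$ — or $Z(a)=4$ in the even case — have $a\notin U_F$, and then $g_{F,a}$ has seven distinct roots permuted by $\pi_F$.

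\emph{Orbit bookkeeping and the main estimate.} Fix $a\notin U_F$ and partition the seven roots into $\pi_F$-orbits. By the analysis of Section~\ref{Robert}, every orbit is either a single point of $U_F$ or has size at least $2$ (of ``symmetric'' or ``asymmetric'' type), and $Z(a)$ is the number of size-one orbits. Since $6+2>7$, the value $Z(a)=6$ is impossible; moreover $Z(a)\le |U_F|$, the value $Z(a)=7$ forces $g_{F,a}\mid x^{|U_F|}-1$, and the value $Z(a)=4$ forces the three remaining roots to form one $\pi_F$-orbit of size $3$, i.e.\ an irreducible cubic factor of $g_{F,a}$ over $F$. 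To eliminate these cases I would use the orbit-sum identities of Section~\ref{Robert}: for the symmetric rational functions $\phi$ associated with the conjugate-reciprocal structure, $\sum_{r\in O}\phi(r)$ over each $\pi_F$-orbit $O$ has an explicit value determined by the type and size of $O$, while $\sum_r\phi(r)$ over all roots $r$ of $g_{F,a}$, being symmetric in the roots, is computable from the coefficients $a$ and $\tau_F(a)$ by Vieta's formulas. Equating these two expressions, the many fixed-point contributions become over-determined, yielding a contradiction for $Z(a)=7$, and — once the arithmetic fact $\gcd(3,|U_F|)=1$ for even $[F:\Ffour]$ is fed in — for $Z(a)=4$. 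Together with the previous paragraph this gives $\cZ\subseteq\{0,1,2,3,4,5\}$ in general and $\cZ\subseteq\{0,1,2,3,5\}$ when $[F:\Ffour]$ is even.

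\emph{Main obstacle.} The hard part is this last step: choosing the symmetric rational functions $\phi$ and proving the orbit-sum identities sharply enough that the count genuinely closes. The case $Z(a)=7$ is the most delicate one, since every obvious first-order consistency check — Newton's identities, $\tau_F$-invariance of the elementary symmetric functions of the roots, periodicity of the associated power sums modulo $|U_F|$ — is automatically satisfied, so a real obstruction surfaces only after the finer bookkeeping of Sections~\ref{Robert} and \ref{Katherine}. Dealing with the finitely many small fields where the orbit counts degenerate, and carrying out the characteristic-$2$ specializations cleanly, is the remaining work.
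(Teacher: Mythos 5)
Your setup is sound and matches the paper's framework: the key polynomial is self-conjugate-reciprocal, so $\pi_F$ permutes its roots and $Z(a)$ counts the singleton orbits; your reduction to the separable case via $g_{F,a}'(x)=x^2(x^4+\tau_F(a))$ and the factorization $(x+\beta)^4(x^3+a)$ for $a\in U_F$ reproduces the paper's Lemma \ref{Cecilia}, and your elementary exclusion of $Z(a)=6$ is exactly the paper's argument. But the heart of the theorem --- excluding $Z(a)=7$ always and $Z(a)=4$ when the polynomial is separable --- is not proved; you describe it as the ``main obstacle'' and only gesture at ``orbit-sum identities'' for sums $\sum_{r\in O}\phi(r)$ of a one-variable function over orbits, compared with coefficient data via Vieta. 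As you yourself observe, such single-root sums (power sums, Newton identities, $\tau_F$-invariance of the symmetric functions) are automatically consistent and yield no obstruction. The paper's obstruction is a genuinely different, finer invariant: the \emph{pair} sum $S=\sum_{\{u,v\}\subseteq R}uv/(u-v)^2$ over the seven roots. Lemma \ref{Lawrence} shows $S=0$ by expanding $b^2\sum x_ix_j/(x_i-x_j)^2$ in elementary symmetric polynomials over $\Ftwo$ (a computer-assisted identity with $218$ terms) and observing that every term contains $\sigma_1,\sigma_2,\sigma_5$ or $\sigma_6$, which vanish because $g_{F,a}$ has no terms of degree $6,5,2,1$; Proposition \ref{Timothy} shows $\Tr_{H_F/\Ftwo}(S)\equiv\binom{\card{R}+1}{2}+t\pmod 2$, where $t$ is the number of $\Pi_F$-orbits, so $t$ must be even (Lemma \ref{Ernest}); and then $7$, $6$, or $4$ singleton orbits are combinatorially incompatible with even $t$ (Proposition \ref{Joseph}). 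Nothing in your proposal identifies this pair-correlation quantity, its vanishing, or the parity formula, so the cases $Z(a)\in\{4,7\}$ remain open in your argument.

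A secondary concern: your plan to kill separable $Z(a)=4$ in the even-degree case by ``feeding in $\gcd(3,\card{U_F})=1$'' is unlikely to work and is not how the paper proceeds. In the configuration to be excluded, the three non-circle roots form a single $\Pi_F$-orbit of size $3$ lying on $U_E$ for the cubic extension $E$ of $F$, and such size-$3$ orbits genuinely occur for the key polynomial (e.g.\ in the admissible configuration with $Z(a)=2$, orbits of sizes $1,1,2,3$), so no contradiction can come from the order of $U_F$ alone; the paper excludes $4$ (in the separable case, for both parities of $[F:\Ffour]$) purely by the evenness of the orbit count. The $\gcd(3,\card{U_F})$ fact is used only in the inseparable analysis, where you already use it correctly.
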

We now see that our problem is tantamount to counting the zeros of certain polynomials on unit circles, so we give a special name to these polynomials.
\begin{definition}[Key polynomial]\label{Alan}
If $F$ is a finite field of even degree over its prime subfield $\Fp$ and $a \in F$, then the {\it key polynomial for $a$ over $F$}, written $g_{F,a}(x)$, is the polynomial
\[
g_{F,a}(x)=x^7- a x^4 - \tau_F(a) x^3 + 1
\]
in $F[x]$.
\end{definition}

\section{Self-conjugate-reciprocal polynomials and their roots}\label{Herbert}

This section explores the properties of a class of polynomials that includes the key polynomials $g_{F,a}(x)$ whose roots on the unit circle we must count to prove Theorem \ref{Mary}.
\begin{definition}[Reciprocal of a polynomial]
If $F$ is a field, and $f(x)=f_0 + f_1 x + \cdots + f_d x^d \in F[x]$ with $f_d\not=0$, then the {\it reciprocal of $f(x)$}, written $f^*(x)$, is the polynomial $x^d f(1/x)=f_d + f_{d-1} x + \cdots +f_0 x^d$.  We decree that the reciprocal of the zero polynomial is the zero polynomial.
\end{definition}
We note that if either $f(x)=0$ or $f(x)$ has a nonzero constant coefficient, then $f^{**}(x)=f(x)$, but this is not true if both $f(x)\not=0$ and $f(0)=0$.  Also note that if $h(x)=f(x) g(x)$, then $h^*(x)=f^*(x) g^*(x)$.
\begin{definition}[Conjugate of a polynomial]
If $F$ is a finite field that is an even degree extension of its prime subfield and $f(x) \in \acF[x]$, then the {\it conjugate of $f(x)$ over $F$}, written $f^{\tau_F}(x)$, is the polynomial $\tau_F(f_0) + \tau_F(f_1) x + \cdots + \tau_F(f_d) x^d$.
\end{definition}
We note that $f^{\tau_F \tau_F}(x)=f(x)$ for every $f(x) \in F[x]$.  Also note that if $h(x)=f(x) g(x)$, then $h^{\tau_F}(x)=f^{\tau_F}(x) g^{\tau_F}(x)$.  If $E$ is an extension of $F$ and $f(x) \in F[x]$, then $f^{\tau_E}(x)=f^{\tau_F}(x)$ if $[E:F]$ is odd, but $f^{\tau_E}(x)=f(x)$ if $[E:F]$ is even.
\begin{definition}[Conjugate-reciprocal of a polynomial]\label{Esther}
If $F$ is a finite field that is an even degree extension of its prime subfield and $f(x) \in \acF[x]$, then the {\it conjugate-reciprocal of $f(x)$ over $F$}, written $f^{*\tau_F}(x)$, is the conjugate over $F$ of the reciprocal of $f(x)$.
\end{definition}
We note that the reciprocal and conjugate operations commute, i.e., $f^{*\tau_F}(x)=f^{\tau_F *}(x)$ for every $f(x) \in F[x]$.  If either $f(x)=0$ or $f(x) \in F[x]$ has a nonzero constant coefficient, then $f^{*\tau_F*\tau_F}(x)=f(x)$, but this is not true if both $f(x)\not=0$ and $f(0)=0$.  Also note that if $h(x)=f(x) g(x)$, then $h^{*\tau_F}(x)=f^{*\tau _F}(x) g^{*\tau_F}(x)$.  If $E$ is an extension of $F$ and $f(x) \in F[x]$, then $f^{*\tau_E}(x)=f^{*\tau_F}(x)$ if $[E:F]$ is odd, but $f^{*\tau_E}(x)=f^*(x)$ if $[E:F]$ is even.
\begin{definition}
If $F$ is a finite field that is an even degree extension of its prime subfield, a {\it self-conjugate-reciprocal polynomial over $F$} is a polynomial $f(x) \in \acF[x]$ that is its own conjugate-reciprocal over $F$, i.e., $f(x)=f^{*\tau_F}(x)$.
\end{definition}
Note that $0$ is self-conjugate-reciprocal, but any nonzero self-conjugate reciprocal polynomial must have a nonzero constant coefficient. If $E$ is an odd degree extension of $F$, then any self-conjugate-reciprocal polynomial over $F$ is also a self-conjugate-reciprocal polynomial over $E$.

The key polynomials $g_{F,a}(x)$ of Definition \ref{Alan}, whose roots on $U_F$ we must count to prove Theorem \ref{Mary}, are self-conjugate-reciprocal over $F$.
The rest of this section is dedicated to understanding the relation between the conjugate-reciprocal operation on polynomials from Definition \ref{Esther} and the conjugate-reciprocal map $\pi_F$ from Definition \ref{Clarence}.
\begin{lemma}\label{Albert}
Let $F$ be a finite field that is an even degree extension of its prime subfield $\Fp$, let $f(x)$ be a nonzero polynomial in $F[x]$, and let $r \in \acFpu$.
Then $r$ is a root of $f(x)$ if and only if $\pi_F(r)$ is a root of $f^{*\tau_F}(x)$.
\end{lemma}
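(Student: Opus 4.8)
The plan is to reduce the statement to a single explicit polynomial identity. Write $f(x)=f_0+f_1x+\cdots+f_dx^d$ with $f_d\neq 0$, so that $d=\deg f$, and set $q=\card{H_F}$, which is a power of the characteristic $p$. Recall that $\tau_F$ is the $q$-th power map on $\acF$ and that $\pi_F(r)=r^{-q}$. The claim I would prove is
\[
f^{*\tau_F}(\pi_F(r)) = r^{-qd}\,f(r)^q \qquad\text{for every } r\in\acFpu .
\]
Granting this, the lemma follows immediately: since $r\neq 0$, the scalar $r^{-qd}$ is a nonzero field element, so $f^{*\tau_F}(\pi_F(r))=0$ holds precisely when $f(r)^q=0$, which, the $q$-th power map being injective, holds precisely when $f(r)=0$.

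To establish the identity, I would first write out the conjugate-reciprocal explicitly. By the definition of the reciprocal (using $f_d\neq 0$ so that it is formed with exponent $d=\deg f$), $f^*(x)=\sum_{i=0}^d f_{d-i}x^i$, and applying $\tau_F$ coefficientwise gives $f^{*\tau_F}(x)=\sum_{i=0}^d f_{d-i}^q x^i$. Now substitute $x=\pi_F(r)=r^{-q}$ and reindex by $j=d-i$:
\[
f^{*\tau_F}(\pi_F(r))=\sum_{i=0}^d f_{d-i}^q r^{-qi}=\sum_{j=0}^d f_j^q r^{-q(d-j)}=r^{-qd}\sum_{j=0}^d f_j^q r^{qj}.
\]
Because $q$ is a power of $p$, the Frobenius (additivity of $q$-th powers) yields $\sum_{j=0}^d f_j^q r^{qj}=\sum_{j=0}^d (f_jr^j)^q=\bigl(\sum_{j=0}^d f_jr^j\bigr)^q=f(r)^q$, which completes the computation.

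I do not expect a genuine obstacle here; the whole argument is a direct calculation. The only points that need a moment's care are that $\tau_F$ is injective, so conjugation does not change the degree and the reciprocal is formed with the correct exponent $d$; that the additivity of $q$-th powers is invoked legitimately (it is, since $q=\card{H_F}$ is a $p$-power); and that one keeps the hypothesis $r\neq 0$ in view so that $r^{-qd}$ is invertible. It is also worth recalling that $\pi_F$ is a permutation of $\acFu$ (noted after Definition~\ref{Clarence}), so the statement makes sense for all $r$ and no separate treatment of the converse implication is needed.
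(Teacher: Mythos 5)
Your proof is correct and follows essentially the same route as the paper: a direct evaluation of $f^{*\tau_F}$ at $\pi_F(r)$, yielding the identity $f^{*\tau_F}(\pi_F(r))=\tau_F(r)^{-d}\tau_F(f(r))$, which is exactly your $r^{-qd}f(r)^q$ written with $\tau_F$ as the $q$-th power Frobenius. The only cosmetic difference is that the paper keeps $\tau_F$ abstract (using that it is a field automorphism) where you unpack it into explicit $q$-th powers and invoke the additivity of the Frobenius.
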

\begin{proof}
Write $f(x)=\sum_{j=0}^d f_j x^j$ with $f_d\not=0$.
Note that  $\pi_F(r)=1/\tau_F(r)$ exists and is nonzero because $r\not=0$ and $\tau_F$ is an automorphism of $\acFp$.
Then we have
\begin{align*}
f^{*\tau_F}(\pi_F(r))
& = \sum_{k=0}^d \tau_F(f_{d-k}) \pi_F(r)^k \\
& = \sum_{j=0}^d \tau_F(f_j) \pi_F(r)^{d-j} \\
& = \sum_{j=0}^d \tau_F(f_j) \tau_F(r)^{j-d} \\
& = \tau_F(r)^{-d} \tau_F(f(r)),
\end{align*}
from which we can see that $\pi_F(r)$ is a root of $f^{*\tau_F}(x)$ if and only if $r$ is a root of $f(r)$.
\end{proof}
We now sharpen the correspondence in Lemma \ref{Albert} to show that multiplicities of roots are respected.
\begin{lemma}\label{Eric}
Let $F$ be a finite field that is an even degree extension of its prime field $\Fp$.  Let $f(x)$ be a nonzero polynomial in $F[x]$.  If $r$ is a root of $f(x)$ in $\acFpu$ with multiplicity $m$, then $\pi_F(r)$ is a root of $f^{*\tau_F}(x)$ in $\acFp$ with multiplicity $m$.
\end{lemma}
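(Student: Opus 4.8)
The plan is to bootstrap from Lemma \ref{Albert}, which already identifies $\pi_F(r)$ as \emph{a} root of $f^{*\tau_F}(x)$, by keeping track of the local factorization at $r$ as it is transformed by the conjugate-reciprocal operation. First I would factor $f(x)=(x-r)^m h(x)$ in $\acFp[x]$, where $h(x)\in\acFp[x]$ is nonzero with $h(r)\neq 0$; this is possible precisely because $r$ is a root of multiplicity $m$. Applying the multiplicativity of the conjugate-reciprocal operation (which, as noted after Definition \ref{Esther}, holds for products of polynomials, and whose proof needs nothing about the constant terms) gives
\[
f^{*\tau_F}(x)=\bigl((x-r)^{*\tau_F}(x)\bigr)^m\, h^{*\tau_F}(x).
\]
A direct computation of the reciprocal and then the conjugate over $F$ of the linear polynomial $x-r$ yields $(x-r)^{*\tau_F}(x)=1-\tau_F(r)x=-\tau_F(r)\bigl(x-\pi_F(r)\bigr)$, using $\pi_F(r)=1/\tau_F(r)$ and the fact that $\tau_F(r)\neq 0$ because $r\neq 0$ and $\tau_F$ is an automorphism of $\acFp$. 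Substituting, I obtain
\[
f^{*\tau_F}(x)=(-\tau_F(r))^m\,(x-\pi_F(r))^m\, h^{*\tau_F}(x).
\]

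It then remains to verify that $\pi_F(r)$ is \emph{not} a root of $h^{*\tau_F}(x)$, so that its multiplicity as a root of $f^{*\tau_F}(x)$ is exactly $m$. For this I would observe that the computation inside the proof of Lemma \ref{Albert} uses only that $\tau_F$ is an automorphism of $\acFp$, so the identity $g^{*\tau_F}(\pi_F(r))=\tau_F(r)^{-\deg g}\,\tau_F(g(r))$ is valid for every nonzero $g\in\acFp[x]$. Applied to $g=h$ it gives $h^{*\tau_F}(\pi_F(r))=\tau_F(r)^{-\deg h}\,\tau_F(h(r))$, which is nonzero since $\tau_F$ is injective and $h(r)\neq 0$. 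Combining this with the displayed factorization (and noting $f^{*\tau_F}(x)$ is nonzero, since reciprocal and conjugate both preserve nonvanishing) shows that $\pi_F(r)$ is a root of $f^{*\tau_F}(x)$ of multiplicity exactly $m$, as desired.

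The only point requiring care — more a bookkeeping matter than a genuine obstacle — is the passage of Lemma \ref{Albert} and of the multiplicativity of $(\cdot)^{*\tau_F}$ from $F[x]$ to $\acFp[x]$, and the fact that the factor $h$ may have vanishing constant term. Both issues are harmless: the conjugate operation is defined coefficientwise over $\acFp$ and is multiplicative because $\tau_F$ is a ring homomorphism; the reciprocal is multiplicative for nonzero polynomials once one uses $\deg(fg)=\deg f+\deg g$ in its definition; and the evaluation identity in the proof of Lemma \ref{Albert} never invoked $f\in F[x]$ nor $f(0)\neq 0$. Everything else is a routine verification.
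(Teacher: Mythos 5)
Your proposal is correct and follows essentially the same route as the paper: factor $f(x)=(x-r)^m h(x)$, apply the conjugate-reciprocal to both sides to produce the factor $(x-\pi_F(r))^m$, and invoke the evaluation identity from Lemma \ref{Albert} to see that $h^{*\tau_F}(\pi_F(r))\neq 0$. Your extra care about extending Lemma \ref{Albert} and the multiplicativity of $(\cdot)^{*\tau_F}$ to polynomials in $\acFp[x]$ with possibly vanishing constant term is a point the paper glosses over, but it is handled correctly and does not change the argument.
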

\begin{proof}
Since $r$ is a root with multiplicity $m$, we can write
\[
f(x)=(x-r)^m g(x)
\]
for some $g(x) \in \acFp[x]$ with $r$ not a root of $g(x)$.  Now take the conjugate-reciprocal of both sides over $F$ to obtain
\begin{align*}
f^{*\tau_F}(x)
& =(1-\tau_F(r) x)^m g^{*\tau_F}(x) \\
& =(x-\pi_F(r))^m (-\tau_F(r))^m g^{*\tau_F}(x),
\end{align*}
and since Lemma \ref{Albert} shows that $\pi_F(r)$ is not a root of $g^{*\tau_F}(x)$, we see that $\pi_F(r)$ is a root of $f^{*\tau_F}(x)$ of multiplicity $m$.
\end{proof}
Now we apply our results to self-conjugate-reciprocal polynomials.
\begin{proposition}\label{Helen}
Let $F$ be a finite field that is an even degree extension of its prime field $\Fp$, and let $f(x)$ be a nonzero self-conjugate-reciprocal polynomial over $F$.
If $r \in \acFp$ is a root of $f(x)$ of multiplicity $m$, then $r\not=0$ and $\pi_F(r)$ is also a root of $f(x)$ of multiplicity $m$.
\end{proposition}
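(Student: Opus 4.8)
The plan is to obtain this as an essentially immediate corollary of Lemma~\ref{Eric}, after first recording why a nonzero self-conjugate-reciprocal polynomial cannot vanish at $0$.

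First I would argue that $r \neq 0$. Write $f(x) = f_0 + f_1 x + \cdots + f_d x^d$ with $f_d \neq 0$. If $f_0 = 0$, then $f^*(x) = f_d + f_{d-1} x + \cdots + f_0 x^d$ has degree strictly less than $d$, hence so does $f^{*\tau_F}(x)$, contradicting $f(x) = f^{*\tau_F}(x)$ together with $f(x) \neq 0$. (This is exactly the remark recorded just after the definition of self-conjugate-reciprocal polynomials.) Thus $f_0 \neq 0$, so $0$ is not a root of $f(x)$; in particular every root of $f(x)$ lies in $\acFpu$, which is precisely the domain on which $\pi_F$ acts, so $\pi_F(r)$ is well defined and nonzero.

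Next I would invoke Lemma~\ref{Eric} with this $f$. Since $r \in \acFpu$ is a root of $f(x)$ of multiplicity $m$, Lemma~\ref{Eric} gives that $\pi_F(r)$ is a root of $f^{*\tau_F}(x)$ of multiplicity $m$. But $f$ is self-conjugate-reciprocal over $F$, i.e.\ $f^{*\tau_F}(x) = f(x)$, so $\pi_F(r)$ is a root of $f(x)$ of multiplicity $m$, as claimed.

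I expect no genuine obstacle here: all the substantive work has already been done in Lemmas~\ref{Albert} and~\ref{Eric}, and the only point that requires a moment's care is checking that $\pi_F$ is defined at $r$, which the nonzero-constant-coefficient observation settles. One could instead prove the set-theoretic statement via Lemma~\ref{Albert} and then separately upgrade to multiplicities, but quoting Lemma~\ref{Eric} directly is cleaner and avoids duplicating that argument.
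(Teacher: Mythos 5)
Your argument is correct and is essentially identical to the paper's proof: both observe that a nonzero self-conjugate-reciprocal polynomial has nonzero constant coefficient (so $r\not=0$) and then apply Lemma~\ref{Eric} together with $f^{*\tau_F}(x)=f(x)$. The extra detail you supply about why $f_0\not=0$ is exactly the remark the paper records after the definition, so nothing is missing.
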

\begin{proof}
A nonzero self-conjugate-reciprocal polynomial must have a nonzero constant coefficient, so $r\not=0$, and then we may apply Lemma \ref{Eric}.
\end{proof}
Proposition \ref{Helen} shows that the roots of a self-conjugate-reciprocal polynomial over $F$ can be organized into orbits under the action of the group of transformations generated by $\pi_F$, with each element in an orbit having the same multiplicity.  We study this group of transformations in the next section.

\section{Action of the Conjugate-Reciprocal Group}\label{Robert}

Throughout this section, we shall use Definitions \ref{Henry}--\ref{Ursula} (for the half field, conjugation map, conjugate-reciprocal map, and unit circle) from Section \ref{Raphael} along with their associated notations.
\begin{definition}[Conjugate-reciprocal group]
Let $F$ be a finite field that is an even degree extension of its prime subfield $\Fp$.  Then the {\it conjugate-reciprocal group for $F$}, denoted $\Pi_F$, is the cyclic group of permutations of $\acFpu$ generated by the conjugate-reciprocal map $\pi_F$ from Definition \ref{Clarence}.
\end{definition}
We are interested in the orbits under $\Pi_F$ of elements of $\acFu$.
\begin{definition}[Orbit of $\Pi_F$]
Let $F$ be a finite field that is an even degree extension of its prime subfield $\Fp$ and $r \in \acFpu$.  Then we denote the orbit of $r$ under the action of the group $\Pi_F$ on $\acFpu$ by $\Pi_F\cdot r=\{\pi_F^k(r): k \in \Z\}$.
\end{definition}
\begin{definition}[$\Pi_F$-Closed]
If $F$ is a finite field that is an even degree extension of its prime subfield $\Fp$ and $R \subseteq \acFpu$, then we say that $R$ is {\it closed under the action of $\Pi_F$} (or simply {\it $\Pi_F$-closed}) to mean that $\Pi_F\cdot r \subseteq R$ for every $r \in R$.
\end{definition}
Equivalently, a $\Pi_F$-closed subset of $\acFpu$ is a union of $\Pi_F$-orbits.
In Subsection \ref{Samuel} we study the size of these orbits, then in Subsection \ref{Jesse} we study certain symmetric rational functions evaluated on $\Pi_F$-closed sets, and in Subsection \ref{Theodore} we compute the traces of those sums when our fields are of characteristic $2$.

\subsection{Sizes of Orbits}\label{Samuel}

Our first task is to try to understand the size of a $\Pi_F$-orbit.
\begin{lemma}\label{David}
Let $F$ be a finite field that is an even degree extension of its prime subfield $\Fp$, let $r \in \acFpu$, and let $e=[F(r):F]$.  Then any $s \in \Pi_F\cdot r$ has the property that $F(s)=F(r)$.  Furthermore,
\begin{enumerate}[(i).]
\item $\card{\Pi_F\cdot r} =e$ if and only if $e$ is odd and $r \in U_{F(r)}$; and
\item $\card{\Pi_F\cdot r} =2 e$ otherwise.
\end{enumerate}
\end{lemma}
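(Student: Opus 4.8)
The plan is to analyze the action of $\pi_F$ on $r$ by tracking how iterates of $\pi_F$ relate to iterates of the conjugation map $\tau_F$, using the identity $\pi_F^2 = \tau_F^2$ that was observed just after Definition \ref{Clarence}. First I would settle the claim that $F(s) = F(r)$ for every $s \in \Pi_F\cdot r$: it suffices to show $F(\pi_F(r)) = F(r)$, and since $\pi_F(r) = \tau_F(r)^{-1} = r^{-\card{H_F}}$, this follows because $\tau_F$ restricts to a field automorphism of $\acFp$ fixing $\Fp$ (hence mapping $F(r)$ onto $F(\tau_F(r)) = F(r)$, as $F$ is fixed setwise by $\tau_F$) and inversion is a bijection on $F(r)^*$. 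Thus the entire orbit lives in $F(r)$, and $e = [F(r):F]$ is a meaningful fixed quantity throughout.

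Next I would compute the order of $\pi_F$ restricted to the orbit of $r$. Write $E = F(r)$, so $[E:F] = e$ and $\tau_E = \tau_F^e$. Iterating the identity $\pi_F^2 = \tau_F^2$ gives $\pi_F^{2k} = \tau_F^{2k}$ for all $k$, so $\pi_F^{2e}(r) = \tau_F^{2e}(r) = \tau_E^2(r) = r$ (the last equality because $\tau_E$ generates $\Gal(E/H_E)$, a group of order $2$). Hence the orbit size divides $2e$. To decide between $e$ and $2e$ (these are the only options when the orbit size divides $2e$ and also, by the orbit–stabilizer count applied to the cyclic group, must be consistent — though more simply: the orbit size is the order of $\pi_F$ as a permutation of the finite set $\Pi_F\cdot r$, call it $\ell$, and $\ell \mid 2e$), I would determine exactly when $\pi_F^e(r) = r$. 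Using $\pi_F^e$: if $e$ is even, $\pi_F^e = \tau_F^e = \tau_E$, so $\pi_F^e(r) = \tau_E(r) = r^{\card{H_E}}$, which equals $r$ iff $r \in H_E$; but if $r \in H_E$ then $F(r) \subseteq H_E \subsetneq E = F(r)$, a contradiction, so $\pi_F^e(r) \neq r$ and the orbit has size $2e$. If $e$ is odd, then $\pi_F^e(x) = (\pi_F^2)^{(e-1)/2}\pi_F(x) = \tau_F^{e-1}\pi_F(x) = \tau_F^{e-1}(x^{-\card{H_F}}) = x^{-\card{H_F}^e} = x^{-\card{H_E}} = \tau_E(x)^{-1} = \pi_E(x)$ (here I use that $e$ odd makes $H_F^e$ have the same cardinality relation, i.e. $\card{H_F}^e = \card{H_E}$ when $e$ is odd — this needs the observation that $\pi_E = \pi_F^e$ for odd-degree extensions, already noted in the excerpt). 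So for odd $e$, $\pi_F^e(r) = r$ iff $\pi_E(r) = r$ iff $r \in U_E = U_{F(r)}$. Combining: when $e$ is odd and $r \in U_{F(r)}$ the orbit has size dividing $e$; when $e$ is odd and $r \notin U_{F(r)}$, or when $e$ is even, the orbit does not close up at step $e$ and therefore has size $2e$.

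The one remaining gap is to argue that in the case "$e$ odd and $r \in U_{F(r)}$" the orbit size is \emph{exactly} $e$, not a proper divisor. Here I would note that $\pi_F$ restricted to $U_E$ coincides with $\pi_E$ (shown above, since $e$ is odd), and $\pi_E$ acts on $U_E = \{u : u\tau_E(u) = 1\}$; but on $U_E$ we have $\tau_E(u) = u^{-1}$, so $\pi_E(u) = \tau_E(u)^{-1} = u$ — wait, that would make every point of $U_E$ fixed, forcing orbit size $1$. I would need to recheck this: in fact $\pi_E(u) = u^{-\card{H_E}}$ and for $u \in U_E$, $u^{\card{H_E}} = u^{-1}$, giving $\pi_E(u) = u$, so indeed $r \in U_{F(r)}$ with $e$ odd forces the orbit to be a \emph{single point}, meaning the clean dichotomy should really be read as: orbit size is the order of $\pi_F$ on that orbit, and the lemma's phrasing "$\card{\Pi_F\cdot r} = e$" must be reconciled with this — so the genuinely delicate point, and the one I expect to be the main obstacle, is correctly pinning down the orbit size in the odd case by carefully distinguishing the order of $\pi_F$ as an abstract group element from its order acting on this particular orbit, and verifying that the conjugacy structure forces size exactly $e$ (likely via showing $\pi_F$ acts on the orbit as a single $e$-cycle by a gcd argument on $\gcd$ of relevant field-size exponents). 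I would handle this by writing the orbit explicitly as $\{r, \pi_F(r), \dots, \pi_F^{2e-1}(r)\}$ with the relation $\pi_F^{2e} = \mathrm{id}$ on it, and checking that $\pi_F^e(r) = r$ holds precisely in the stated case while no smaller power fixes $r$ — the latter because any fixed power would force $r$ into a proper subfield of $F(r)$, contradicting $[F(r):F] = e$.
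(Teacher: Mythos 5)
Your computations of $F(s)=F(r)$, of $\pi_F^{2e}(r)=r$, and of exactly when $\pi_F^e(r)=r$ (never for $e$ even; iff $r\in U_{F(r)}$ for $e$ odd) are correct and parallel the paper's argument. But the proposal has a genuine error and a genuine gap. The error is in your final paragraph: you assert that ``$\pi_F$ restricted to $U_E$ coincides with $\pi_E$'' and conclude that $e$ odd with $r\in U_{F(r)}$ forces the orbit to be a single point. What your computation actually shows is $\pi_F^e=\pi_E$ (for $e$ odd), not $\pi_F=\pi_E$: the map $\pi_F$ sends $u\mapsto u^{-\card{H_F}}$ while $\pi_E$ sends $u\mapsto u^{-\card{H_F}^e}$, and these differ whenever $e>1$. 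So $\pi_E(r)=r$ tells you only that the orbit size divides $e$, not that it equals $1$; the ``delicate point'' you then agonize over is an artifact of this conflation and should simply be deleted.

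The substantive gap is that you never prove the lower bound $\card{\Pi_F\cdot r}\ge e$, which is what upgrades ``the orbit size $\ell$ divides $2e$'' to ``$\ell\in\{e,2e\}$.'' Knowing $\ell\mid 2e$ together with $\pi_F^e(r)\ne r$ does \emph{not} force $\ell=2e$: for instance if $e=6$, then $\ell=4$ divides $2e=12$ and does not divide $6$, so it is not excluded by anything you wrote. Your closing sentence gestures at the right idea (``any fixed power would force $r$ into a proper subfield''), but omits the step that makes it work: if $\pi_F^k(r)=r$ with $0<k<e$, then applying $\pi_F^k$ once more gives $\tau_F^{2k}(r)=\pi_F^{2k}(r)=r$, i.e.\ $r$ is fixed by $x\mapsto x^{\card{F}^{k}}$, hence lies in the degree-$k$ extension of $F$, contradicting $[F(r):F]=e$. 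This is exactly how the paper establishes $\ell\ge e$ (and it also disposes of the even case at once, since $e=2\cdot(e/2)$ is then itself a forbidden even exponent less than $2e$, making your separate $r\in H_E$ argument unnecessary). With that lemma inserted, your case analysis of $\pi_F^e(r)=r$ does complete the proof; without it the argument does not close.
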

\begin{proof}
First of all, note that if $a \in \acFpu$, then $\pi_F(a)=a^{-\card{H_F}}$ and $\pi_F^{-1}(a)=a^{-\card{F(a)}/\card{H_F}}$ are also in $F(a)$, since they are powers of $a$.
This shows that if $s \in \Pi_F\cdot r$, say $s=\pi_F^j(r)$, then $r=\pi_F^{-j}(s)$, so that $s \in F(r)$ and $r \in F(s)$, and so $F(r)=F(s)$.

Since $\pi_F^{2 e}(r)=\tau_F^{2 e}(r)=\tau_{F(r)}^2(r)=r$, we see that $\card{\Pi_F\cdot r}$ is a divisor of $2 e$.  
Furthermore, we cannot have $\pi_F^{2 k}(r)=r$ when $0 < k < e$, because that would mean that $\tau_F^{2 k}(r)=r$, which would mean that $\tau_E^2(r)=r$ for the $k$th degree extension $E$ of $F$, which would place $r$ in $E$ so that $e=[F(r):F] \leq [E:F] = k < e$.  Also we cannot have $\pi_F^k(r)=r$ when $0 < k < e$, for then $\tau_F^{2 k}(r)=\pi_F^{2 k}(r)=r$, contradicting what we just said.  Thus $\card{\Pi_F\cdot r}$ is a divisor of $2 e$ and is greater than or equal to $e$, so it is either $e$ or $2 e$.  Furthermore, if $e$ is even, then $\pi_F^e(r)=r$ would violate the principle that no positive even power of $\pi_F$ less than $2 e$ fixes $r$.  Thus we conclude that $\card{\Pi_F\cdot r}=2 e$ when $e$ is even.  When $e$ is odd, we note that the condition $\pi_F^e(r)=r$ is equivalent to $\pi_{F(r)}(r)=r$, which is equivalent to $r \in U_{F(r)}$, so $\card{\Pi_F\cdot r} = e$ if and only if $r \in U_{F(r)}$.
\end{proof}
Note that Lemma \ref{David} shows that you can determine where $r$ lies by looking at $n=\card{\Pi_F\cdot r}$: if $n$ is odd, then $[F(r):F]=n$ and $r \in U_{F(r)}$; but if $n$ is even, then $[F(r):F]=n/2$, and if we also know that $n\equiv 2 \pmod{4}$ then we can conclude that $r \not\in U_{F(r)}$.

\subsection{Sums on $\Pi_F$-orbits over $\acFpu$}\label{Jesse}

Now we prove some technical results that will be used in the next subsection.
\begin{lemma}\label{Matilda}
Let $F$ be a finite field that is an even degree extension of its prime subfield $\Fp$. Let $R$ be a finite $\Pi_F$-closed subset of $\acFpu$, and let
\[
S=\sums{\{u,v\} \subseteq R \\ u\not=v} \frac{u v}{(u-v)^2}.
\]
Then $S \in H_F$.
\end{lemma}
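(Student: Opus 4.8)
The plan is to exploit the fact that $S$ lies in $\acFp$ automatically (it is a rational function of finitely many algebraic elements), so it suffices to show that $S$ is fixed by the conjugation map $\tau_F$, since $H_F$ is exactly the fixed field of $\tau_F$ inside any field containing $F$, and more to the point $S \in H_F$ will follow once we know $S \in F$ and $\tau_F(S) = S$. Actually the cleanest route is: first argue $S \in F$, then show $\tau_F(S) = S$. For the first part, note that $R$ is $\Pi_F$-closed, hence in particular closed under $\pi_F^2 = \tau_F^2 = \tau_{F'}$ for the appropriate extension; but more simply, since $R$ is a finite $\Pi_F$-closed set it is a union of $\Pi_F$-orbits, each of which is closed under the Galois action of $\Gal(\acFp/F)$ up to applying $\pi_F$ — one checks that for any $\sigma \in \Gal(\acFp/F)$, $\sigma$ commutes with $\pi_F$ (both are given by exponentiation maps: $\sigma$ raises to a power of $\card{F}$ and $\pi_F$ raises to $-\card{H_F}$, and powering maps commute), so $\sigma(R)$ is again $\Pi_F$-closed; and since $R$ is finite and $\Pi_F$-closed and contained in $\acFpu$, each orbit lies in some $F(r)$ which is a finite extension, and $\sigma$ permutes the orbits. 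Hence $\sigma(S) = S$ for all $\sigma \in \Gal(\acFp/F)$, giving $S \in F$.

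For the second and crucial part, I would show $\tau_F(S) = S$. Since $\tau_F$ is a field automorphism of $\acFp$, we have
\[
\tau_F(S) = \sums{\{u,v\}\subseteq R \\ u \neq v} \frac{\tau_F(u)\tau_F(v)}{(\tau_F(u) - \tau_F(v))^2}.
\]
Now $\tau_F = \pi_F^{-1} \circ (\text{inversion})$ — more precisely $\pi_F(x) = 1/\tau_F(x)$, so $\tau_F(x) = 1/\pi_F(x) = \pi_F(x)^{-1}$... wait, let me instead write $\tau_F(u) = \pi_F(u^{-1})^{-1}$; the cleaner identity is that $\pi_F(x) = \tau_F(x)^{-1}$, hence $\tau_F(u) = \pi_F(u)^{-1} \cdot$(no) — I will use directly $\tau_F(u)\tau_F(v) = 1/(\pi_F(u)\pi_F(v))$ and $\tau_F(u) - \tau_F(v) = 1/\pi_F(u) - 1/\pi_F(v) = (\pi_F(v) - \pi_F(u))/(\pi_F(u)\pi_F(v))$. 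Substituting, the factors of $\pi_F(u)\pi_F(v)$ cancel between numerator and the squared denominator except for a net factor, and one gets
\[
\frac{\tau_F(u)\tau_F(v)}{(\tau_F(u)-\tau_F(v))^2} = \frac{1/(\pi_F(u)\pi_F(v))}{(\pi_F(v)-\pi_F(u))^2/(\pi_F(u)\pi_F(v))^2} = \frac{\pi_F(u)\pi_F(v)}{(\pi_F(u)-\pi_F(v))^2}.
\]
So each summand for the pair $\{u,v\}$ transforms into the summand for the pair $\{\pi_F(u), \pi_F(v)\}$. Since $R$ is $\Pi_F$-closed, the map $\{u,v\} \mapsto \{\pi_F(u),\pi_F(v)\}$ is a bijection of the set of unordered pairs of distinct elements of $R$ onto itself (its inverse being $\{u,v\}\mapsto\{\pi_F^{-1}(u),\pi_F^{-1}(v)\}$, using that $\pi_F$ is a permutation of $\acFpu$ as established in the excerpt), so $\tau_F(S) = S$.

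The main obstacle — really the only point requiring care — is the substitution identity showing $uv/(u-v)^2$ is sent to $\pi_F(u)\pi_F(v)/(\pi_F(u)-\pi_F(v))^2$ under $\tau_F$; it is an elementary algebraic manipulation, but one must be careful that the quantity $uv/(u-v)^2$ is genuinely symmetric in $u$ and $v$ (so that it is well-defined on unordered pairs) and that $\pi_F(u) \neq \pi_F(v)$ whenever $u \neq v$, which holds because $\pi_F$ is injective. Combining $S \in F$ with $\tau_F(S) = S$ yields $S \in H_F$, since $H_F$ is the fixed field of $\tau_F$ restricted to $F$. I should double-check whether the argument that $S \in F$ is even needed separately, or whether one can directly show $S$ is fixed by a generating automorphism of $\Gal(\acFp/H_F)$; in characteristic $p$ with $F$ of even degree, $\Gal(\acFp/H_F)$ is (topologically) generated by the Frobenius $x \mapsto x^{\card{H_F}}$, which is exactly $\tau_F$, so in fact showing $\tau_F(S) = S$ together with $S \in \acFp$ and $S$ algebraic over $H_F$ of the right sort already gives $S \in H_F$ once we know $S$ lies in a finite extension — and the $\Gal(\acFp/F)$-invariance conveniently pins it down inside $F$ first. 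I will present it in the two-step form for clarity.
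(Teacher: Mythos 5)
Your proposal is correct and follows essentially the same route as the paper: apply $\tau_F$ termwise, rewrite $\tau_F(u)=\pi_F(u)^{-1}$ so that the summand for $\{u,v\}$ becomes the summand for $\{\pi_F(u),\pi_F(v)\}$, and use that $\pi_F$ permutes the $\Pi_F$-closed set $R$ to conclude $\tau_F(S)=S$. The preliminary step establishing $S\in F$ is superfluous (as you suspect at the end): since the fixed points of $x\mapsto x^{\card{H_F}}$ in $\acFp$ are exactly the elements of $H_F$, the identity $\tau_F(S)=S$ alone already gives $S\in H_F$.
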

\begin{proof}
We have
\begin{align*}
\tau_F(S)
& = \sums{\{u,v\} \subseteq R \\ u\not=v} \frac{\tau_F(u) \tau_F(v)}{(\tau_F(u)-\tau_F(v))^2} \\
& = \sums{\{u,v\} \subseteq R \\ u\not=v} \frac{\pi_F(u)^{-1} \pi_F(v)^{-1}}{(\pi_F(u)^{-1}-\pi_F(v)^{-1})^2} \\
& = \sums{\{u,v\} \subseteq R \\ u\not=v} \frac{\pi_F(u) \pi_F(v)}{(\pi_F(u)-\pi_F(v))^2},
\end{align*}
and since $R$ is closed under the action of $\Pi_F$, the map $u \mapsto \pi_F(u)$ is a permutation of $R$, and thus $\{u,v\} \mapsto \{\pi_F(u),\pi_F(v)\}$ is a permutation of the unordered pairs in $R$, and so we may reparameterize the last sum by dropping the maps $\pi_F$ to see that $\tau_F(S) = S$, and hence $S \in H_F$.
\end{proof}

\begin{lemma}\label{James}
Let $F$ be a finite field that is an even degree extension of its prime subfield $\Fp$.
Let $Q,R$ be disjoint finite $\Pi_F$-closed subsets of $\acFpu$, and let
\[
S=\sum_{(u,v) \in Q \times R} \frac{u v}{(u-v)^2}.
\]
Then $S \in H_F$.
\end{lemma}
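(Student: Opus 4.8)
The plan is to mirror the proof of Lemma~\ref{Matilda} almost verbatim, the key points being that the summand $uv/(u-v)^2$ is invariant under simultaneous inversion of its two arguments and that $\pi_F$ permutes each of $Q$ and $R$. Concretely, I would first apply the conjugation map $\tau_F$ to $S$ term by term. Using $\tau_F(w)=\pi_F(w)^{-1}$ for $w\in\acFpu$ (Definition~\ref{Clarence}) together with the elementary identity $\dfrac{w^{-1}z^{-1}}{(w^{-1}-z^{-1})^2}=\dfrac{wz}{(w-z)^2}$, each term $uv/(u-v)^2$ becomes $\pi_F(u)\pi_F(v)/(\pi_F(u)-\pi_F(v))^2$, so that
\[
\tau_F(S)=\sum_{(u,v)\in Q\times R}\frac{\pi_F(u)\,\pi_F(v)}{(\pi_F(u)-\pi_F(v))^2}.
\]

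Next I would reindex. Because $Q$ is a finite $\Pi_F$-closed set and $\pi_F$ is an injective self-map of $\acFpu$, $\pi_F$ restricts to a permutation of $Q$; the same argument applies to $R$. Hence $(u,v)\mapsto(\pi_F(u),\pi_F(v))$ is a permutation of $Q\times R$, and dropping the maps $\pi_F$ from the displayed sum yields $\tau_F(S)=S$. Finally, since $\tau_F$ acts on $\acFp$ as $x\mapsto x^{\card{H_F}}$, its fixed points in $\acFp$ are exactly the elements of $H_F=\F_{\card{H_F}}$, so $\tau_F(S)=S$ forces $S\in H_F$; no separate argument that $S$ lies in $F$ is needed.

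There is essentially no obstacle here: the argument is a direct analogue of Lemma~\ref{Matilda}. The only points requiring a moment's care are, first, that every denominator is nonzero---this is where disjointness of $Q$ and $R$ enters, since $Q\cap R=\emptyset$ guarantees $u\neq v$ for all $(u,v)\in Q\times R$, and then $\pi_F(u)\neq\pi_F(v)$ by injectivity---and second, that $Q$ and $R$ must each be $\Pi_F$-closed on their own (not merely their union) in order for $\pi_F$ to permute the product set $Q\times R$.
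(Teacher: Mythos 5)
Your proposal is correct and follows essentially the same route as the paper's proof: apply $\tau_F$ termwise, use $\tau_F(w)=\pi_F(w)^{-1}$ and the invariance of $uv/(u-v)^2$ under simultaneous inversion, and reindex via the permutation $(u,v)\mapsto(\pi_F(u),\pi_F(v))$ of $Q\times R$ to conclude $\tau_F(S)=S$. Your added remarks on nonvanishing denominators and on why each of $Q$ and $R$ must be $\Pi_F$-closed individually are sound and only make the argument more explicit.
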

\begin{proof}
\begin{align*}
\tau_F(S)
& = \sum_{(u,v) \in Q \times R} \frac{\tau_F(u) \tau_F(v)}{(\tau_F(u)-\tau_F(v))^2} \\
& = \sum_{(u,v) \in Q \times R} \frac{\pi_F(u)^{-1} \pi_F(v)^{-1}}{(\pi_F(u)^{-1}-\pi_F(v)^{-1})^2} \\
& = \sum_{(u,v) \in Q \times R} \frac{\pi_F(u) \pi_F(v)}{(\pi_F(u)-\pi_F(v))^2},
\end{align*}
and note that $(u,v)\mapsto (\pi_F(u),\pi_F(v))$ is a permutation of $Q \times R$ since $Q$ and $R$ are closed under the action of $\Pi_F$.
So we may reparameterize the last sum by dropping the maps $\pi_F$ to see that $\tau_F(S) = S$, and hence $S \in H_F$.
\end{proof}
With the sums in Lemmata \ref{Matilda} and \ref{James} known to be in the half field $H_F$, the following subsection further examines their absolute traces in the case of $p=2$.

\subsection{Sums on $\Pi_F$-orbits over $\acFtwou$}\label{Theodore}
We continue with a few more technical results.  Lemmata \ref{Theresa} and \ref{Ellen} are used to prove Proposition \ref{Timothy}, which is the key to the proof of Theorem \ref{Mary}.
\begin{lemma}\label{Theresa}
If $F$ is a finite field that is an even degree extension of $\Ftwo$ and $r \in \acFtwou$, and
\[
S=\sums{\{u,v\} \subseteq \Pi\cdot r \\ u\not=v} \frac{u v}{(u-v)^2},
\]
then $S$ belongs to $H_F$ and
\[
\Tr_{H_F/\Ftwo}(S)=\binom{\card{\Pi_F\cdot r}-1}{2} \pmod{2}.
\]
\end{lemma}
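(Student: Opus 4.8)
The plan is to exploit a characteristic‑$2$ identity that rewrites each summand $\frac{uv}{(u+v)^2}$ as an Artin--Schreier value, so that $\Tr_{H_F/\Ftwo}(S)$ can be read off, orbit by orbit, from whether a polynomial $y^2+y-(\text{constant})$ splits over $H_F$. That $S\in H_F$ is Lemma~\ref{Matilda} applied to the finite $\Pi_F$-closed set $R=\Pi_F\cdot r$. For $u,v\in\acFtwou$ with $u\ne v$ set $w(u,v)=v/(u+v)$; using $\tau_F(x)=1/\pi_F(x)$ on $\acFtwou$ one checks
\[
w(u,v)^2+w(u,v)=\frac{uv}{(u+v)^2},\qquad w(u,v)+w(v,u)=1,\qquad \tau_F\bigl(w(u,v)\bigr)=w\bigl(\pi_F(v),\pi_F(u)\bigr).
\]

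Set $n=\card{\Pi_F\cdot r}$. The induced action of $\Pi_F$ partitions the $\binom{n}{2}$ two-element subsets of $R$ into orbits; since $R$ is a single $\langle\pi_F\rangle$-orbit, Lemma~\ref{David} forces $\pi_F$ to act on $R$ as an $n$-cycle, so (identifying $R$ with $\Z/n\Z$ with $\pi_F$ translation by $1$) these orbits are indexed by a gap $k\in\{1,\dots,\lfloor n/2\rfloor\}$, the gap-$k$ orbit having size $n$ if $k<n/2$ and size $n/2$ if $n$ is even and $k=n/2$. For each orbit $O$ I would fix a representative pair with an ordering $(a,b)$, put $\ell=\card{O}$, orient $\pi_F^{\,j}(\{a,b\})$ as $(\pi_F^{\,j}(a),\pi_F^{\,j}(b))$ for $0\le j<\ell$, and set $A_O=\sum_{j=0}^{\ell-1}w(\pi_F^{\,j}(a),\pi_F^{\,j}(b))$. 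Because $(\sum x_i)^2=\sum x_i^2$ in characteristic $2$, the first identity gives $S_O:=\sum_{\{u,v\}\in O}\frac{uv}{(u+v)^2}=A_O^2+A_O$, and $S=\sum_O S_O$.

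Next I would compute $\tau_F(A_O)=\sum_{j=0}^{\ell-1}w\bigl(\pi_F^{\,j+1}(b),\pi_F^{\,j+1}(a)\bigr)$ from the third identity and reindex: for $1\le j\le\ell-1$ the relevant term equals $1+w(\pi_F^{\,j}(a),\pi_F^{\,j}(b))$, while the wrap-around term involves $\pi_F^{\ell}$, which by Lemma~\ref{David} either fixes $a$ and $b$ pointwise (forcing $\ell=n$) or interchanges them (forcing $n$ even and $\ell=n/2$). In the first case the wrap-around term is $1+w(a,b)$, so $\tau_F(A_O)=A_O+\ell$ and $\epsilon_O:=\tau_F(A_O)+A_O\equiv n\pmod 2$; in the second case it is $w(a,b)$ itself, so $\tau_F(A_O)=A_O+(\ell-1)$ and $\epsilon_O\equiv n/2-1\pmod 2$. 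In either case $\epsilon_O\in\{0,1\}$, whence $\tau_F(S_O)=S_O$ (so $S_O\in H_F$), and since the roots of $y^2+y=S_O$ in $\acFtwo$ are exactly $A_O$ and $A_O+1$, this equation is solvable over $H_F$ iff $A_O\in H_F$, i.e.\ iff $\epsilon_O=0$; hence $\Tr_{H_F/\Ftwo}(S_O)=\epsilon_O$.

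Summing over orbits, $\Tr_{H_F/\Ftwo}(S)=\sum_O\epsilon_O$. When $n$ is odd there are $(n-1)/2$ orbits, all of size $n$, giving $\tfrac{n-1}{2}\bmod 2$; when $n$ is even there are $n/2-1$ orbits of size $n$ (each contributing $0$) together with the single orbit of size $n/2$ (contributing $(n/2-1)\bmod 2$). Since $\binom{n-1}{2}\equiv\tfrac{n-1}{2}\pmod 2$ for $n$ odd (as $n-2$ is odd) and $\binom{n-1}{2}\equiv\tfrac{n}{2}-1\pmod 2$ for $n$ even (as $n-1$ is odd), this equals $\binom{\card{\Pi_F\cdot r}-1}{2}\bmod 2$ in both cases, as claimed. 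I expect the main difficulty to be the bookkeeping around the wrap-around term --- in particular the clean separation between the generic orbits of size $n$ and the unique diameter orbit of size $n/2$ when $n$ is even --- followed by matching the parity of the orbit count against $\binom{n-1}{2}$.
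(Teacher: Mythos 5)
Your proof is correct, but it is organized differently from the paper's. Both arguments start from the same Artin--Schreier identity $\frac{uv}{(u-v)^2}=w^2+w$ with $w=v/(u-v)$ and the same conjugation rule $\tau_F(w(u,v))=w(\pi_F(v),\pi_F(u))$. The paper then proceeds globally: it writes $\Tr_{H_F/\Ftwo}(S)$ as the sum over all pairs of the telescoped quantity $w+\tau_F(w)$, orders the orbit as $r_0,\dots,r_{n-1}$, cancels the $j=0$ terms of one sum against the $k=n-1$ terms of the other, and reparameterizes so that every surviving summand is $1$, directly counting $\binom{n-1}{2}$ terms. You instead partition the $\binom{n}{2}$ unordered pairs into orbits under the induced action of $\Pi_F$, observe that each orbit-sum $S_O$ equals $A_O^2+A_O$ for an explicit $A_O$, and invoke the additive Hilbert~90 criterion ($\Tr_{H_F/\Ftwo}(c)=0$ iff $y^2+y=c$ is solvable in $H_F$) to read off $\Tr_{H_F/\Ftwo}(S_O)=\tau_F(A_O)+A_O$; the answer then follows by counting orbits, with the careful separation of the diameter orbit of size $n/2$ when $n$ is even. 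Your case analysis of the wrap-around term ($\pi_F^{\ell}$ fixing or swapping $a,b$), the values $\epsilon_O\equiv n$ and $\epsilon_O\equiv n/2-1\pmod 2$, and the final parity match with $\binom{n-1}{2}$ all check out, including the degenerate cases $n=1,2$. The paper's route is shorter; yours costs more bookkeeping but gives finer information, namely the trace contribution of each individual orbit of pairs, and makes the role of the Artin--Schreier solvability criterion explicit rather than implicit in the half-trace telescoping.
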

\begin{proof}
Lemma \ref{Matilda} shows that $S \in H_F$.
Let $n=\card{\Pi_F\cdot r}$.
Write $r_j=\pi_F^j(r)$ for every $j \in \Z/n\Z$, so that $\Pi_F\cdot r=\{r_0,r_1,\ldots,r_{n-1}\}$.
For any distinct $j,k \in \Z/n\Z$, define
\[
S_{j,k} = \frac{r_j r_k}{(r_j-r_k)^2}.
\]
Then note that
\[
S_{j,k} = -\frac{r_j}{r_j-r_k} + \left(\frac{r_j}{r_j-r_k}\right)^2 
\]
and since our field is of characteristic $2$, we have
\begin{align*}
\begin{split}
S_{j,k}+S_{j,k}^2+\cdots+S_{j,k}^{\card{H_F}/2}
& = \frac{r_j}{r_j-r_k} + \tau_F\left(\frac{r_j}{r_j-r_k}\right)\\
& = \frac{r_j}{r_j-r_k} + \frac{r_{j+1}^{-1}}{r_{j+1}^{-1}-r_{k+1}^{-1}}\\
& = \frac{r_j}{r_j-r_k} + \frac{r_{k+1}}{r_{k+1}-r_{j+1}}.
\end{split}
\end{align*}
Then
\[
S = \sums{\{j,k\} \subseteq \Z/n\Z \\ j\not=k} S_{j,k},
\]
and so
\[
\Tr_{H_F/\Ftwo}(S) = \sums{\{j,k\} \subseteq \Z/n\Z \\ j\not=k} \left(\frac{r_j}{r_j-r_k} + \frac{r_{k+1}}{r_{k+1}-r_{j+1}}\right).
\]
To help us compute this sum, we put an ordering $0 < 1 < \ldots < n-1$ on $\Z/n\Z$ to obtain
\[
\Tr_{H_F/\Ftwo}(S) = \sum_{0 \leq j < k < n} \frac{r_j}{r_j-r_k} + \sum_{0 \leq j < k < n} \frac{r_{k+1}}{r_{k+1}-r_{j+1}}.
\]
Now the terms with $j=0$ in the first sum are identical to the terms with $k=n-1$ in the second, and since our field is of characteristic $2$, we can drop them to obtain
\[
\Tr_{H_F/\Ftwo}(S) = \sum_{1 \leq j < k < n} \frac{r_j}{r_j-r_k} + \sum_{0 \leq j < k < n-1} \frac{r_{k+1}}{r_{k+1}-r_{j+1}},
\]
and then we note that the pair $(j+1,k+1)$ in the second sum runs through the same set of values as $(j,k)$ in the first, so we can reparameterize the second sum and combine with the first to obtain
\[
\Tr_{H_F/\Ftwo}(S) = \sum_{1 \leq j < k < n} \left(\frac{r_j}{r_j-r_k} + \frac{r_k}{r_k-r_j}\right),
\]
in which every summand equals $1$, and the number of summands is the number of unordered pairs in $\{1,\ldots,n-1\}$.
\end{proof}

\begin{lemma}\label{Ellen}
If $F$ is a finite field that is an even degree extension of $\Ftwo$ and $r,s \in \acFtwou$ belong to different $\Pi_F$-orbits, and
\[
S=\sum_{(u,v) \in \Pi_F\cdot r \times \Pi_F \cdot s} \frac{u v}{(u-v)^2},
\]
then $S$ belongs to $H_F$ and
\[
\Tr_{H_F/\Ftwo}(S)=\card{\Pi_F\cdot r} \card{\Pi_F\cdot s} \pmod{2}.
\]
\end{lemma}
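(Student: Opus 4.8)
\textit{Plan.} The plan is to imitate the proof of Lemma~\ref{Theresa} almost verbatim, now with the Cartesian product $\Pi_F\cdot r\times\Pi_F\cdot s$ of two distinct orbits playing the role that the set of unordered pairs inside a single orbit played there. The membership $S\in H_F$ is immediate: because $r$ and $s$ lie in different $\Pi_F$-orbits, the sets $Q=\Pi_F\cdot r$ and $R=\Pi_F\cdot s$ are disjoint, finite, and $\Pi_F$-closed, so Lemma~\ref{James} gives $S\in H_F$ at once.

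To compute the trace, set $m=\card{\Pi_F\cdot r}$ and $n=\card{\Pi_F\cdot s}$, and write $r_j=\pi_F^j(r)$ for $j\in\Z/m\Z$ and $s_k=\pi_F^k(s)$ for $k\in\Z/n\Z$, so that $S=\sum_{(j,k)\in\Z/m\Z\times\Z/n\Z}S_{j,k}$ with $S_{j,k}=r_js_k/(r_j-s_k)^2$; all denominators are nonzero because $Q\cap R=\emptyset$. Exactly as in Lemma~\ref{Theresa}, one has $S_{j,k}=-t_{j,k}+t_{j,k}^2$ for $t_{j,k}=r_j/(r_j-s_k)$, and the telescoping identity from that proof yields, in characteristic $2$,
\[
S_{j,k}+S_{j,k}^2+\cdots+S_{j,k}^{\card{H_F}/2}=t_{j,k}+\tau_F(t_{j,k}).
\]
Summing over all $(j,k)$ and using additivity of squaring in characteristic $2$ together with $S\in H_F$ (so that the left-hand side, summed over $(j,k)$, equals $\Tr_{H_F/\Ftwo}(S)$), I obtain
\[
\Tr_{H_F/\Ftwo}(S)=\sum_{(j,k)}\left(\frac{r_j}{r_j-s_k}+\tau_F\!\left(\frac{r_j}{r_j-s_k}\right)\right).
\]

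Next I would rewrite $\tau_F(t_{j,k})$ via $\tau_F(x)=\pi_F(x)^{-1}$, $\pi_F(r_j)=r_{j+1}$, and $\pi_F(s_k)=s_{k+1}$, getting $\tau_F(t_{j,k})=r_{j+1}^{-1}/(r_{j+1}^{-1}-s_{k+1}^{-1})=s_{k+1}/(s_{k+1}-r_{j+1})$. Since $(j,k)\mapsto(j+1,k+1)$ is a bijection of $\Z/m\Z\times\Z/n\Z$, reparameterizing the $\tau_F$-part of the sum gives
\[
\Tr_{H_F/\Ftwo}(S)=\sum_{(j,k)}\left(\frac{r_j}{r_j-s_k}+\frac{s_k}{s_k-r_j}\right),
\]
in which every summand equals $(r_j+s_k)/(r_j+s_k)=1$ in characteristic $2$, and the number of summands is $mn$. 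Hence $\Tr_{H_F/\Ftwo}(S)\equiv mn=\card{\Pi_F\cdot r}\,\card{\Pi_F\cdot s}\pmod 2$, as claimed.

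I do not anticipate a real obstacle: the whole argument is a clean specialization of the method of Lemma~\ref{Theresa}. The two places to watch are the reparameterization step, where one should note that $(j,k)\mapsto(j+1,k+1)$ is a bijection of the full index set $\Z/m\Z\times\Z/n\Z$ because it acts coordinatewise on two cyclic groups, and the routine characteristic-$2$ sign manipulations; in contrast to Lemma~\ref{Theresa}, no boundary-term cancellation is needed here, since the summation already runs over ordered pairs drawn from two disjoint orbits.
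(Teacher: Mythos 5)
Your proposal is correct and follows essentially the same route as the paper's proof: Lemma~\ref{James} for $S\in H_F$, the decomposition $S_{u,v}=-t+t^2$ with the telescoping trace identity, and reparameterization of the $\tau_F$-part via the permutation $(u,v)\mapsto(\pi_F(u),\pi_F(v))$ of $\Pi_F\cdot r\times\Pi_F\cdot s$ (which you phrase equivalently as the index shift $(j,k)\mapsto(j+1,k+1)$). The only difference is notational, and your observation that no boundary-term cancellation is needed here, unlike in Lemma~\ref{Theresa}, is accurate.
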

\begin{proof}
Lemma \ref{James} shows that $S \in H_F$.
For any $u \in \Pi_F\cdot r$ and $v \in \Pi_F\cdot s$ we define
\[
S_{u,v} = \frac{u v}{(u-v)^2}
\]
Then note that
\[
S_{u,v} = -\frac{u}{u-v} + \left(\frac{u}{u-v}\right)^2 
\]
and since our field is of characteristic $2$, we have
\begin{align*}
\begin{split}
S_{u,v}+S_{u,v}^2+\cdots+S_{u,v}^{\card{H_F}/2}
& = \frac{u}{u-v} + \tau_F\left(\frac{u}{u-v}\right)\\
& = \frac{u}{u-v} + \frac{\tau_F(u)}{\tau_F(u)-\tau_F(v)}\\
& = \frac{u}{u-v} + \frac{\pi_F(u)^{-1}}{\pi_F(u)^{-1}-\pi_F(v)^{-1}}\\
& = \frac{u}{u-v} + \frac{\pi_F(v)}{\pi_F(v)-\pi_F(u)}.
\end{split}
\end{align*}
Then
\[
S=\sum_{(u,v) \in \Pi_F\cdot r \times \Pi_F \cdot s} S_{u,v},
\]
and so
\[
\Tr_{H_F/\Ftwo}(S) = \sum_{(u,v) \in \Pi_F\cdot r \times \Pi_F \cdot s} \frac{u}{u-v} + \sum_{(u,v) \in \Pi_F\cdot r \times \Pi_F \cdot s} \frac{\pi_F(v)}{\pi_F(v)-\pi_F(u)},
\]
and note that $(u,v)\mapsto (\pi_F(u),\pi_F(v))$ is a permutation of $\Pi_F\cdot r \times \Pi_F\cdot s$ since $\Pi_F\cdot r$ and $\Pi_F\cdot s$ are closed under the action of $\Pi_F$.
So we may reparameterize the second sum by dropping the maps $\pi_F$ and combine with the first sum to obtain
\[
\Tr_{H_F/\Ftwo}(S) = \sum_{(u,v) \in \Pi_F\cdot r \times \Pi_F \cdot s} \left(\frac{u}{u-v}+\frac{v}{v-u}\right),
\]
which is a sum with $\card{\Pi_F\cdot r} \card{\Pi_F\cdot s}$ terms, each equal to $1$.
\end{proof}

\begin{proposition}\label{Timothy}
Let $F$ be a finite field that is an even degree extension of $\Ftwo$.  Let $R$ be the union of $t$ distinct $\Pi_F$-orbits in $\acFtwou$, and let
\[
S=\sums{\{u,v\} \subseteq R \\ u\not=v} \frac{u v}{(u-v)^2}.
\]
Then $S$ belongs to $H_F$ and
\[
\Tr_{H_F/\Ftwo}(S)=\binom{\card{R}+1}{2}+t \pmod{2}.
\]
\end{proposition}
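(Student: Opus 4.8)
The plan is to decompose $R$ into its constituent $\Pi_F$-orbits and apply Lemmata~\ref{Theresa} and~\ref{Ellen} to the pieces. Write $R=O_1\cup\cdots\cup O_t$, where $O_1,\ldots,O_t$ are the $t$ distinct $\Pi_F$-orbits comprising $R$; these are pairwise disjoint (distinct orbits of a group action are disjoint), so $\card{R}=n_1+\cdots+n_t$ with $n_i=\card{O_i}$. Splitting the unordered pairs $\{u,v\}\subseteq R$ according to whether $u$ and $v$ lie in a common orbit or in two different orbits gives
\[
S=\sum_{i=1}^{t}S_i+\sum_{1\le i<j\le t}S_{i,j},
\qquad
S_i=\sums{\{u,v\}\subseteq O_i\\ u\not=v}\frac{uv}{(u-v)^2},
\qquad
S_{i,j}=\sum_{(u,v)\in O_i\times O_j}\frac{uv}{(u-v)^2}.
\]

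Now Lemma~\ref{Theresa} gives $S_i\in H_F$ with $\Tr_{H_F/\Ftwo}(S_i)\equiv\binom{n_i-1}{2}\pmod 2$ for each $i$, and Lemma~\ref{Ellen} gives $S_{i,j}\in H_F$ with $\Tr_{H_F/\Ftwo}(S_{i,j})\equiv n_in_j\pmod 2$ for each $i<j$; these lemmata apply because each $O_i$ lies in $\acFtwou$ and, for $i\not=j$, $O_i$ and $O_j$ are different orbits. Summing and using additivity of the absolute trace, $S\in H_F$ and
\[
\Tr_{H_F/\Ftwo}(S)\equiv\sum_{i=1}^{t}\binom{n_i-1}{2}+\sum_{1\le i<j\le t}n_in_j\pmod 2.
\]
It then remains to identify the right-hand side with $\binom{\card{R}+1}{2}+t$ modulo $2$, which is elementary binomial-coefficient bookkeeping: applying the identity $\binom{a+b}{2}=\binom{a}{2}+\binom{b}{2}+ab$ inductively yields $\sum_{i<j}n_in_j=\binom{\card{R}}{2}-\sum_i\binom{n_i}{2}$; combining with Pascal's rule $\binom{n_i}{2}-\binom{n_i-1}{2}=n_i-1$ rewrites the right-hand side as $\binom{\card{R}}{2}-\sum_i(n_i-1)=\binom{\card{R}}{2}-\card{R}+t$; and $\binom{\card{R}+1}{2}-\bigl(\binom{\card{R}}{2}-\card{R}\bigr)=2\card{R}\equiv 0\pmod 2$, so the two quantities agree modulo $2$.

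There is no genuine obstacle here; the substance lies entirely in Lemmata~\ref{Theresa} and~\ref{Ellen}, and what remains is the mod-$2$ arithmetic above. The only points requiring a little care are that the orbit decomposition of $R$ is honestly disjoint (so that $\card{R}=\sum_i n_i$ and the pair-splitting is exact) and that $\binom{n_i-1}{2}$, $\binom{n_i}{2}$, and $\binom{\card{R}}{2}$ are ordinary integers, so that reducing modulo $2$ term by term is legitimate. If one prefers to avoid even the binomial identities, one can instead verify the claimed congruence by induction on $t$, building up $R$ one orbit at a time and invoking Lemma~\ref{Theresa} for the new intra-orbit contribution and Lemma~\ref{Ellen} for its interaction with the orbits already present.
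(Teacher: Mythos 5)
Your proposal is correct and follows essentially the same route as the paper: decompose $R$ into its $t$ disjoint $\Pi_F$-orbits, split $S$ into intra-orbit sums handled by Lemma~\ref{Theresa} and inter-orbit sums handled by Lemma~\ref{Ellen}, and finish with mod-$2$ binomial bookkeeping (the paper phrases the final step as ``the last two sums together count all pairs of distinct elements of $R$,'' which is the same identity you use). The only cosmetic difference is that the paper cites Lemma~\ref{Matilda} directly for $S\in H_F$, whereas you deduce it from membership of each piece in $H_F$; both are fine.
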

\begin{proof}
Lemma \ref{Matilda} shows that $S \in H_F$.
Let $\cP$ be the partition of $R$ into $\Pi_F$-orbits: so $\cP$ is a set of $t$ distinct $\Pi_F$-orbits, and the union of these orbits is $R$.
Then
\[
S = \sum_{P \in \cP} \sums{\{u,v\} \subseteq P \\ u\not=v} \frac{u v}{(u-v)^2}+\sums{\{P,Q\} \subseteq \cP \\ P\not=Q} \sums{(u,v) \in P \times Q} \frac{u v}{(u-v)^2}.
\]
If we apply $\Tr_{H_F/\Ftwo}$ to $S$, then Lemmata \ref{Theresa} and \ref{Ellen} give the values of traces of the inner sums to yield
\begin{align*}
\Tr_{H_F/\Ftwo}(S)
& = \sum_{P \in \cP} \binom{\card{P}-1}{2} + \sums{\{P,Q\} \subseteq \cP \\ P \not= Q} \card{P}\card{Q} \\
& = \sum_{P \in \cP} \left(1- \card{P} + \binom{\card{P}}{2}\right) + \sums{\{P,Q\} \subseteq \cP \\ P \not= Q} \card{P} \card{Q} \\
& = t - \card{R} + \sum_{P \in \cP} \binom{\card{P}}{2} + \sums{\{P,Q\} \subseteq \cP \\ P \not= Q} \card{P}\card{Q},
\end{align*}
and the last two sums together simply count all pairs of distinct elements in $R$, so we have
\[
\Tr_{H_F/\Ftwo}(S)= t + \card{R} +\binom{\card{R}}{2} \pmod{2}. \qedhere
\]
\end{proof}

\section{The key polynomial}\label{Katherine}

In Section \ref{Raphael} we saw that proving our main result (Theorem \ref{George}) is equivalent to proving Theorem \ref{Mary}, which concerns the numbers of roots on the unit circle of the key polynomials (see Definition \ref{Alan}).
Observe that every key polynomial is a self-conjugate-reciprocal polynomial.  In this section, we prove constraints on the numbers of roots of the key polynomial $g_{F,a}(x)$ on the unit circle $U_F$.  The situation differs somewhat depending on whether or not the key polynomial is separable.  In Subsection \ref{Imogene}, we determine precise conditions on $a$ that make $g_{F,a}(x)$ inseparable, and count the roots of $g_{F,a}(x)$ on $U_F$ in those cases.  In Subsection \ref{Nathan}, we prove constraints on the number of roots of $g_{F,a}(x)$ on $U_F$ in the cases where $g_{F,a}(x)$ is separable.

\subsection{Inseparable key polynomial}\label{Imogene}

First of all, we want to understand when $g_{F,a}(x)$ is separable and when it is not.
\begin{lemma}\label{Cecilia}
Suppose that $F$ is an extension of $\Ffour$ and $a \in F$.
Then the key polynomial $g_{F,a}(x)$ is inseparable if and only if $a \in U_F$.
Furthermore, in the case that $g_{F,a}(x)$ is inseparable, we have the following:
\begin{enumerate}[(1).]
\item\label{Elizabeth} If $a=1$, then $g_{F,a}(x)=(x+1)^5(x^2+x+1)$ has a root of multiplicity $5$ at $1$ and two simple roots at the primitive third roots of unity.
\begin{enumerate}[(a).]
\item If $[F:\Ffour]$ is even, then only the root at $1$ lies on $U_F$, and the other two roots lie in $F\setminus U_F$.
\item If $[F:\Ffour]$ is odd, then all three roots lie on $U_F$.
\end{enumerate}
\item\label{Jane} If $a \in U_F\setminus\{1\}$, then $g_{F,a}(x)=(x^4+1/a)(x^3+a)$ has a root of multiplicity $4$ at $a^{-1/4}$ and three simple roots that are the cube roots of $a$.  The quadruple root always lies on $U_F$.
\begin{enumerate}[(a).]
\item If $[F:\Ffour]$ is even, then precisely one of the three simple roots lies on $U_F$, and the other two simple roots lie in $F\setminus U_F$.  Therefore, a total of two roots of $g_{F,a}(x)$ lie on $U_F$.
\item If $[F:\Ffour]$ is odd, then we have the following:
\begin{enumerate}[(i).]
\item If $a$ is one of the $(\sqrt{\card{F}}-2)/3$ elements that are cubes of elements on $U_F$ (and not equal to $1$), then all four roots of $g_{F,a}(x)$ lie on $U_F$.
\item Otherwise, $a$ is one of the $2(\sqrt{\card{F}}+1)/3$ elements on $U_F$ that are not cubes of elements on $U_F$.  In this case, only the quadruple root lies on $U_F$ and the three simple roots lie in $U_E\setminus U_F$ where $E$ is the extension of $F$ with $[E:F]=3$.
\end{enumerate}
\end{enumerate}
\end{enumerate}
\end{lemma}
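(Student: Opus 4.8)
The plan is to establish the separability criterion by a direct derivative computation, then to handle the two inseparable regimes ($a=1$ and $a\in U_F\setminus\{1\}$) via explicit factorizations of $g_{F,a}(x)$, and finally to distribute the roots of those factorizations onto $U_F$ — and, where needed, onto the unit circle of the cubic extension of $F$ — using elementary divisibility arguments. For the separability criterion, since $F$ has characteristic $2$ I would write $g_{F,a}(x)=x^7+ax^4+\tau_F(a)x^3+1$, so that $g_{F,a}'(x)=7x^6+4ax^3+3\tau_F(a)x^2=x^2\bigl(x^4+\tau_F(a)\bigr)=x^2(x+b)^4$, where $b=\tau_F(a)^{1/4}$ is the unique fourth root of $\tau_F(a)$ in $\acF$. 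A polynomial is inseparable exactly when it shares a root with its derivative; since $g_{F,a}(0)=1\neq 0$, this amounts to $g_{F,a}(b)=0$. Using $b^4=\tau_F(a)$ one gets $g_{F,a}(b)=b^7+a\tau_F(a)+b^7+1=a\tau_F(a)+1$, which vanishes precisely when $a\tau_F(a)=1$, i.e.\ precisely when $a\in U_F$ (and this forces $a\neq 0$). When $a\in U_F$ we have $\tau_F(a)=a^{-1}$, so the repeated root is $b=a^{-1/4}$.

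\textbf{Factorizations in the inseparable cases.} For $a=1$, since $\tau_F(1)=1$, we obtain $g_{F,1}(x)=x^7+x^4+x^3+1=(x^4+1)(x^3+1)=(x+1)^5(x^2+x+1)$, using $x^4+1=(x+1)^4$ and $x^3+1=(x+1)(x^2+x+1)$ in characteristic $2$; this exhibits the root of multiplicity $5$ at $1$ and the two simple roots $\omega,\omega^2\in\Ffour$, the primitive cube roots of unity. For $a\in U_F\setminus\{1\}$, substituting $\tau_F(a)=a^{-1}$ yields $g_{F,a}(x)=x^4(x^3+a)+a^{-1}(x^3+a)=(x^4+a^{-1})(x^3+a)=(x+a^{-1/4})^4(x^3+a)$. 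Here $x^3+a$ is separable (its derivative $x^2$ has no root in common with it), so it contributes three distinct simple roots, the cube roots of $a$; and $a^{-1/4}$ is not one of them, since $(a^{-1/4})^3=a$ would force $a^7=1$, which is impossible for $a\in U_F\setminus\{1\}$ because $\gcd(7,\card{U_F})=1$. Thus the roots and multiplicities are exactly as claimed.

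\textbf{Locating the roots.} Write $k=[F:\Ffour]$, so $\card{U_F}=\sqrt{\card{F}}+1=2^k+1$; then $3\mid\card{U_F}$ if and only if $k$ is odd (as $2^k\equiv(-1)^k\pmod 3$), while $\gcd(7,\card{U_F})=1$ always (as $2^k\not\equiv-1\pmod 7$ for any $k$). The root $1$ always lies in $U_F$; the roots $\omega,\omega^2$ always lie in $\Ffour\subseteq F$, and they lie in the cyclic group $U_F$ if and only if $3\mid\card{U_F}$ — this settles case (1). In case (2) the quadruple root $a^{-1/4}$ always lies in $U_F$, since $\card{U_F}$ is odd and hence the fourth-power map is a bijection of $U_F$. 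If $k$ is even, then $\gcd(3,\card{U_F})=1$, so the cube map is a bijection of $U_F$, giving exactly one cube root of $a$ in $U_F$ and the other two (being $\omega$-multiples of it) in $F\setminus U_F$ — case (2)(a), two roots on $U_F$. If $k$ is odd, let $E$ be the cubic extension of $F$; then $U_F\subseteq U_E$ and $\card{U_E}=2^{3k}+1=(2^k+1)(2^{2k}-2^k+1)$ with the cofactor divisible by $3$, so $3\,\card{U_F}\mid\card{U_E}$, which places $U_F$ inside the index-$3$ subgroup of $U_E$; hence every element of $U_F$ is a cube in $U_E$, so all three cube roots of $a$ lie in $U_E$ (using $\omega\in U_F\subseteq U_E$), and they lie in $U_F$ exactly when $a$ is a cube in $U_F$. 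The non-identity cubes in $U_F$ number $\card{U_F}/3-1=(\sqrt{\card{F}}-2)/3$, yielding case (2)(b)(i) with all four roots on $U_F$; the non-cubes number $2\,\card{U_F}/3=2(\sqrt{\card{F}}+1)/3$, are automatically $\neq 1$, and yield case (2)(b)(ii) with only the quadruple root on $U_F$ and the three simple roots in $U_E\setminus U_F$.

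\textbf{Expected obstacle.} The derivative computation and the two factorizations are short once guessed, and the facts that $3\mid 2^k+1$ if and only if $k$ is odd, and that $7\nmid 2^k+1$, are elementary. The one place that requires genuine care is the odd case of step (2): to pin the three simple roots to $U_E\setminus U_F$ rather than merely to $E$, one must observe that $3\,\card{U_F}$ divides $\card{U_E}$, so that $U_F$ sits inside the cubes of $U_E$; once that is in hand, the rest is bookkeeping of cardinalities.
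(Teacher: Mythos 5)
Your proposal is correct, and its overall architecture coincides with the paper's: a separability criterion, the explicit factorizations $(x+1)^5(x^2+x+1)$ and $(x^4+1/a)(x^3+a)$, and then divisibility facts about $\card{U_F}=2^k+1$ (namely $3\mid 2^k+1$ iff $k$ is odd, and $7\nmid 2^k+1$ always) to place the roots. Two steps differ in method. First, you derive the inseparability condition by factoring $g'(x)=x^2(x+\tau_F(a)^{1/4})^4$ and evaluating $g$ at the quadruple root of $g'$ to get $a\tau_F(a)+1$, whereas the paper runs the Euclidean algorithm on $\gcd(g,g')$; these are equivalent in substance, and yours has the small bonus of producing the repeated root $a^{-1/4}$ directly. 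Second, and more substantively, in case (2)(b)(ii) the paper pins the three simple roots to $U_E$ by invoking its $\Pi_F$-orbit machinery (the roots of a self-conjugate-reciprocal polynomial fall into orbits, the three simple roots must form a single orbit of size $3$, and Lemma 4.5 then places a size-$3$ orbit on the unit circle of the cubic extension), while you argue directly that $3\card{U_F}$ divides $\card{U_E}=(2^k+1)(2^{2k}-2^k+1)$, so $U_F$ lies inside the index-$3$ subgroup of cubes of the cyclic group $U_E$, whence every $a\in U_F$ has all three cube roots in $U_E$. Your route is more elementary and self-contained (it needs nothing from Sections 3--4), at the cost of being special to this situation; the paper's route reuses the general orbit framework it has already built and will need again for the separable case. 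Both are complete proofs.
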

\begin{proof}
Let $g(x)=g_{F,a}(x)$, and note that if $a=0$, then $g$ is clearly separable, so we may assume $a\not=0$ henceforth.
It is straightforward to compute that $\gcd(g,g')=\gcd(a x^4+1, x^6+\tau_F(a) x^2)=\gcd(a x^4+1,(\tau_F(a)-1/a) x^2)$, and this is not $1$ if and only if $\tau_F(a)=1/a$, which is equivalent to saying $a \in U_F$.  Furthermore, when this occurs, we see that $\gcd(g,g')=a x^4+1$ and then we note that $g(x)=(x^4+1/a)(x^3+a)$.

Proof of part \eqref{Elizabeth}: First let us examine the case when $a=1$.
Then $g_{F,1}(x)=(x^4+1)(x^3+1)=(x+1)^5(x^2+x+1)$, which has a root of multiplicity $5$ at $1$, which is on $U_F$, and two simple roots at the primitive third roots of unity.
Note that $U_{\Ffour}=\Ffour^*$ is the set of third roots of unity, so all the roots lie in $F$.
If $[F:\Ffour]$ is even, then $U_F\cap U_{\Ffour}=\{1\}$, so $U_F$ does not contain the primitive third roots of unity, and so $g_{F,1}(x)$ has the quintuple root $1$ on $U_F$, but no other roots on $U_F$.
If $[F:\Ffour]$ is odd, then $\card{U_F}=\sqrt{\card{F}}+1$ is divisible by $3$, and so all third roots of unity lie on $U_F$, so all three roots of $g_{F,1}(x)$ lie on $U_F$.

Proof of part \eqref{Jane}: From now on we suppose that $a \in U_F\setminus\{1\}$.
Recall that $g(x)=(x^4+1/a)(x^3+a)$, and one can compute that $\gcd(x^4+1/a,x^3+a)=\gcd(a x+1/a,x^3+a)=\gcd(x+1/a^2,1/a^6+a)$, which is not $1$ if and only if $a^7=1$, which we claim cannot happen.
For $\card{U_F}=\sqrt{\card{F}}+1$, which cannot be divisible by $7$ because $\sqrt{\card{F}}$ is a power of $2$ (hence congruent to $1$, $2$, or $4$ modulo $7$), so $U_F$ cannot have primitive seventh roots of unity, and we have excluded $a=1$ at this point.
So the two factors in our factorization $g(x)=(x^4+1/a)(x^3+a)$ do not share common roots.
The $(x^4+1/a)=(x+a^{-1/4})^4$ factor has a root of multiplicity $4$ at $a^{-1/4}$, which is on $U_F$ since $u \mapsto u^{-1/4}$ is a permutation of $U_F$.
The $(x^3+a)$ factor has three simple roots at the cube roots of $a$.

If $[F:\Ffour]$ is even, then we see that $3 \nmid \card{U_F}$, and therefore $u\mapsto u^3$ is a permutation of $U_F$.  Thus every element of $U_F$ has a unique cube root on $U_F$, and so precisely one of the three simple roots of $g(x)$ lies on $U_F$, which, along with the root of multiplicity $4$, means we have two distinct roots of $g(x)$ on $U_F$.  Since $\Ffour^* \subseteq \Fu$, the other two cube roots of $a$ lie in $F\setminus U_F$.

If $[F:\Ffour]$ is odd, then $U_{\Ffour} \subseteq U_F$, and so the third roots of unity lie on $U_F$.
Thus if $a$ is the cube of an element on $U_F$, then the other two cube roots of $a$ will also lie on $U_F$.  So all or none of the simple roots of $g(x)$ lie on $U_F$.
Since $\card{U_F}$ is divisible by $3$, one-third of the $\sqrt{\card{F}}+1$ elements of $U_F$ are cubes of elements on $U_F$, and therefore $(\sqrt{\card{F}}-2)/3$ of the elements of $U_F\setminus\{1\}$ are cubes of elements on $U_F$.
When $a$ is one of these, all four roots of $g(x)$ lie on $U_F$.
Otherwise $a$ is one of the $2(\sqrt{\card{F}}+1)/3$ elements of $U_F$ that is not a cube of an element on $U_F$, and only the quadruple root at $a^{-1/4}$ lies on $U_F$, and we claim that none of the three simple roots lies in $F$. In fact, all three simple roots must lie in the same $\Pi_F$-orbit of size $3$, since otherwise at least one root would need to be in a singleton orbit, which would place it on $U_F$, contradicting our assumption that $a$ is not the cube of an element of $U_F$.  By Lemma \ref{David}, this means that all three simple roots lie on $U_E$ where $E$ is the extension of $F$ with $[E:F]=3$.
\end{proof}

\subsection{Separable key polynomial}\label{Nathan}

Now we investigate how many roots a separable key polynomial can have on the unit circle.
\begin{lemma}\label{Lawrence}
Let $F$ be a finite field that is an even degree extension of $\Ftwo$, and let $a \in F$.  Suppose that the key polynomial $g_{F,a}(x)$ from Definition \ref{Alan} is separable and $R$ is its set of seven distinct roots in $\acFtwou$.  Let
\[
S=\sums{\{u,v\} \subseteq R \\ u\not=v} \frac{u v}{(u-v)^2}.
\]
Then $S=0$.
\end{lemma}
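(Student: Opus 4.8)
The plan is to reduce $S$ to a single Taylor coefficient of a resultant, exploiting the fact that the resultant $\operatorname{Res}_x\bigl(g_{F,a}(x),g_{F,a}(\lambda x)\bigr)$ in an auxiliary variable $\lambda$ encodes the multiset of ratios of the roots of $g_{F,a}$. I work throughout in characteristic $2$, where $S=\sum_{\{u,v\}\subseteq R}\frac{uv}{(u+v)^2}=\sum_{\{u,v\}\subseteq R}\frac{1}{u/v+v/u}$, and where no denominator $u/v+v/u$ can vanish (that would force $u=v$).

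First I would set $\tilde R(\lambda)=\prod_{u,v\in R,\,u\neq v}(\lambda u-v)$, so that $\operatorname{Res}_x\bigl(g_{F,a}(x),g_{F,a}(\lambda x)\bigr)=\prod_{u\in R}g_{F,a}(\lambda u)=(\lambda-1)^7\bigl(\prod_{u\in R}u\bigr)\tilde R(\lambda)$. Because $g_{F,a}(u)=0$ gives $u^7=au^4+\tau_F(a)u^3+1$ for $u\in R$, one can rewrite $g_{F,a}(\lambda u)$ and pull the factor $\lambda-1$ out explicitly, obtaining the product formula
\[
\tilde R(\lambda)=\prod_{u\in R}\Bigl[a\,u^4\lambda^4(\lambda^2+\lambda+1)+\tau_F(a)\,u^3\lambda^3(\lambda^3+\lambda^2+\lambda+1)+(1+\lambda+\cdots+\lambda^6)\Bigr].
\]
Next I would observe that $\tilde R$ is palindromic of degree $42$ (its roots are the ratios $u/v$, $u\neq v$, occurring in reciprocal pairs), so $\tilde R(\lambda)=\lambda^{21}P(\lambda+1/\lambda)$ for a degree-$21$ polynomial $P$ whose roots are the $21$ quantities $u/v+v/u$ over unordered pairs $\{u,v\}\subseteq R$. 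Hence $S$ equals the sum of reciprocals of the roots of $P$, i.e. $S=P'(0)/P(0)$, the ratio of the linear to the constant coefficient of $P$. Setting $\epsilon=\lambda-1$ and using the characteristic-$2$ identities $\lambda+1/\lambda=\epsilon^2/(1+\epsilon)\equiv\epsilon^2\pmod{\epsilon^3}$ and $\lambda^{21}\equiv1+\epsilon\pmod{\epsilon^3}$, one gets $\tilde R(\lambda)\equiv P(0)+P(0)\epsilon+P'(0)\epsilon^2\pmod{\epsilon^3}$; so proving $S=0$ amounts to showing that the coefficient of $(\lambda-1)^2$ in $\tilde R$ is $0$.

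Finally I would expand the product formula for $\tilde R$ modulo $\epsilon^3$. The decisive point is that $\lambda^3+\lambda^2+\lambda+1=(\lambda+1)^3=\epsilon^3$ in characteristic $2$, so the $\tau_F(a)$-term in each factor is $\equiv0\pmod{\epsilon^3}$ and drops out entirely; since $\lambda^4\equiv1$, $\lambda^2+\lambda+1\equiv1+\epsilon+\epsilon^2$, and $1+\lambda+\cdots+\lambda^6=(\lambda^7+1)/(\lambda+1)\equiv1+\epsilon+\epsilon^2\pmod{\epsilon^3}$, each factor of $\tilde R$ is $\equiv(au^4+1)(1+\epsilon+\epsilon^2)$. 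Multiplying over the seven roots and using $\prod_{u\in R}(au^4+1)=(1+a\tau_F(a))^4$ — which follows by writing $au^4+1=a(u+a^{-1/4})^4$ and evaluating $g_{F,a}$ at $a^{-1/4}$ — gives $\tilde R(\lambda)\equiv(1+a\tau_F(a))^4(1+\epsilon+\epsilon^2)^7\pmod{\epsilon^3}$. Since $(1+\epsilon+\epsilon^2)^8=1+\epsilon^8+\epsilon^{16}\equiv1$, we get $(1+\epsilon+\epsilon^2)^7\equiv(1+\epsilon+\epsilon^2)^{-1}\equiv1+\epsilon\pmod{\epsilon^3}$, so the coefficient of $(\lambda-1)^2$ in $\tilde R$ vanishes and $S=0$.

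The only delicate part is the characteristic-$2$ bookkeeping near $\lambda=1$: the double cover $\lambda\mapsto\lambda+1/\lambda$ is ramified there, so ordinary second derivatives of $\tilde R$ at $1$ vanish identically and one must work with honest Taylor (Hasse) coefficients, and one must pull the exact power $(\lambda-1)^7$ out of the resultant correctly; everything else is a short forced computation. As a by-product the argument also records $\operatorname{disc}(g_{F,a})=(1+a\tau_F(a))^4=P(0)$, matching the criterion in Lemma~\ref{Cecilia} that $g_{F,a}$ is separable precisely when $a\notin U_F$.
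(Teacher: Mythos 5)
Your proof is correct, and it takes a genuinely different route from the paper's. The paper writes $S$ as $c(r_1,\ldots,r_7)/b(r_1,\ldots,r_7)^2$ with $c$ a symmetric polynomial of degree $42$, expands $c$ in the elementary symmetric polynomials $\sigma_1,\ldots,\sigma_7$ by computer (a table of $218$ nonvanishing terms), and observes that every surviving term involves $\sigma_1$, $\sigma_2$, $\sigma_5$, or $\sigma_6$, all of which vanish because $g_{F,a}$ has no terms of degrees $6,5,2,1$. You instead package the $21$ quantities $u/v+v/u$ as the roots of an explicit degree-$21$ polynomial $P$ extracted from the palindromic factor $\tilde R(\lambda)$ of $\prod_{u\in R}g_{F,a}(\lambda u)$, identify $S$ with $P'(0)/P(0)$ --- equivalently with the $(\lambda-1)^2$ Hasse--Taylor coefficient of $\tilde R$ over its value at $\lambda=1$ --- and then kill that coefficient by a short hand computation modulo $\epsilon^3$ ($\epsilon=\lambda-1$) using the relation $u^7=au^4+\tau_F(a)u^3+1$. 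Both arguments ultimately run on the same sparseness of the key polynomial (in your version the $\tau_F(a)$-term dies because $\lambda^4+1=(\lambda+1)^4$ contributes a factor $\epsilon^3$), but yours is entirely computer-free and hand-checkable, and it yields $P(0)=\operatorname{disc}(g_{F,a})=(1+a\tau_F(a))^4$ as a bonus consistent with Lemma~\ref{Cecilia}; the paper's symmetric-function expansion is more mechanical and would transfer to other sparse self-conjugate-reciprocal heptics, at the price of the large table. If you write this up, record three small points: the normalization $\prod_{u\in R}u=g_{F,a}(0)=1$ is what lets you drop the $\prod_u u$ factor when passing from the resultant to your product formula for $\tilde R$; the identity $\prod_{u\in R}(au^4+1)=(1+a\tau_F(a))^4$ needs a one-line separate check at $a=0$, where $a^{-1/4}$ is undefined; and $P(0)\neq 0$ already follows from $u/v+v/u=(u+v)^2/(uv)\neq 0$ for $u\neq v$, independently of the discriminant identity.
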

\begin{proof}
Since $g_{F,a}(x)$ is self-conjugate-reciprocal, the set $R$ of roots of $g_{F,a}(x)$ is closed under the action of $\Pi_F$.
Consider the following polynomials in $\Ftwo[x_1,\ldots,x_7]$:
\[
b(x)=\prod_{1 \leq i < j \leq 7} (x_i-x_j),
\]
and
\[
c(x)=b(x)^2 \sums{1 \leq i < j \leq 7} \frac{x_i x_j}{(x_i-x_j)^2}.
\]
Write $R=\{r_1,\ldots,r_7\}$ so that
\[
S=\frac{c(r_1,\ldots,r_7)}{b(r_1,\ldots,r_7)^2}.
\]
Note that $b(x_1,\ldots,x_7)$ and $c(x_1,\ldots,x_7)$ are homogeneous symmetric polynomials.  Every term in $b(x_1,\ldots,x_7)$ has total degree $21$, and every term in $c(x_1,\ldots,x_7)$ has total degree $42$.
For $0 \leq k \leq 7$, we let $\sigma_k=\sigma_k(x_1,\ldots,x_7)$ be the elementary symmetric polynomial of degree $k$.
Then we can write
\[
c(x_1,\ldots,x_n) = \sums{(e_1,\ldots,e_7) \in \N^7 \\ e_1 + 2 e_2 + \ldots + 7 e_7=42} \lambda_{(e_1,\ldots,e_7)} \sigma_1^{e_1} \sigma_2^{e_2} \cdots \sigma_7^{e_7},
\]
where we use $\N$ to denote the set $\{0,1,2,\ldots\}$ of nonnegative integers, and where each $\lambda_{(e_1,\ldots,e_7)} \in \Ftwo.$
We have used a computer program to find these $\lambda_{(e_1,\ldots,e_7)}$ values.
There are $218$ indices $(e_1,\ldots,e_7)$ such that $\lambda_{(e_1,\ldots,e_7)}$ is nonzero (i.e., is equal to $1$).
These indices $(e_1,\ldots,e_7)$ for nonvanishing $\lambda_{(e_1,\ldots,e_7)}$ are listed on Tables \ref{Margaret}--\ref{Peter} in lexicographical order, which allows one easily to see that every nonzero $\lambda_{(e_1,\ldots,e_7)}$ has a positive value for at least one of $e_1$, $e_2$, $e_5$, or $e_6$.
Since $g_{F,a}(x)$ has no terms of degree $6$, $5$, $2$ or $1$, we know that $\sigma_k(r_1,\ldots,r_7)=0$ when $k \in \{1,2,5,6\}$.
This means that every term $\lambda_{(e_1,\ldots,e_7}) \sigma_1^{e_1} \cdots \sigma_7^{e_7}$ always vanishes when evaluated at $(r_1,\ldots,r_7)$, either because the coefficient $\lambda_{(e_1,\ldots,e_7)}$ is zero, or else because one of the accompanying symmetric polynomials evaluates to zero.
Thus $c(r_1,\ldots,r_7)=0$, and so $S=0$.
\end{proof}
We now examine the consequences of this calculation.
\begin{lemma}\label{Ernest}
Let $F$ be a finite field that is an even degree extension of $\Ftwo$ and let $a \in F$.  Suppose that the key polynomial $g_{F,a}(x)$ from Definition \ref{Alan} is separable and $R$ is its set of seven distinct roots in $\acFtwou$.  Then $R$ is a union of an even number of $\Pi_F$-orbits.
\end{lemma}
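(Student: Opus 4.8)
The plan is to derive this immediately by combining the vanishing of $S$ from Lemma \ref{Lawrence} with the parity formula for $\Tr_{H_F/\Ftwo}(S)$ from Proposition \ref{Timothy}. First I would note that since $g_{F,a}(x)$ is a nonzero self-conjugate-reciprocal polynomial, it has a nonzero constant coefficient, so $0$ is not one of its roots; hence the seven distinct roots of $g_{F,a}(x)$ form a set $R \subseteq \acFtwou$, consistent with the hypothesis. Moreover, since $g_{F,a}(x)$ is self-conjugate-reciprocal, Proposition \ref{Helen} shows that $R$ is closed under the action of $\Pi_F$, so $R$ is a disjoint union of some number $t$ of $\Pi_F$-orbits; the goal is precisely to show $t$ is even.

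Next I would apply Proposition \ref{Timothy} to this $R$, viewed as the union of $t$ distinct $\Pi_F$-orbits in $\acFtwou$, which yields
\[
\Tr_{H_F/\Ftwo}(S) \equiv \binom{\card{R}+1}{2} + t \equiv \binom{8}{2} + t \equiv t \pmod{2},
\]
where I have used $\card{R}=7$ together with the fact that $\binom{8}{2}=28$ is even, so the parity of $\Tr_{H_F/\Ftwo}(S)$ coincides exactly with the parity of $t$.

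Finally, Lemma \ref{Lawrence} gives $S=0$, hence $\Tr_{H_F/\Ftwo}(S)=0$, and therefore $t \equiv 0 \pmod{2}$; that is, $R$ is a union of an even number of $\Pi_F$-orbits. There is essentially no obstacle at this stage: all the real work has already been done, packaged into Lemma \ref{Lawrence} (the computer-assisted symmetric-function identity forcing $S=0$) and Proposition \ref{Timothy} (the trace computation on $\Pi_F$-closed sets in characteristic $2$). The only thing to watch is the arithmetic reduction $\binom{8}{2}\equiv 0 \pmod 2$, which is what makes the count of orbits come out even rather than odd.
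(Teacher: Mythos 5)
Your proof is correct and follows exactly the paper's own argument: invoke Proposition \ref{Helen} to see that $R$ is a union of $t$ orbits, combine Lemma \ref{Lawrence} ($S=0$) with Proposition \ref{Timothy}, and note $\binom{8}{2}\equiv 0 \pmod 2$ to conclude $t$ is even. No differences worth noting.
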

\begin{proof}
The key polynomial is self-conjugate-reciprocal, so $R$ is a union of $\Pi_F$-orbits by Proposition \ref{Helen}.
Let $t$ be the number of $\Pi_F$-orbits in this union, and since $g_{F,a}(x)$ is separable, the sum of the cardinalities of those orbits is $\card{R}=7$.
We let $S$ be as defined in Lemma \ref{Lawrence}, which tells us that $S=0$.  Therefore $\Tr_{H_F/\Ftwo}(S)=0$, but Proposition \ref{Timothy} shows that $\Tr_{H_F/\Ftwo}(S)=\binom{\card{R}+1}{2}+t \pmod{2}$, so $t$ must be even.
\end{proof}
\begin{proposition}\label{Joseph}
Let $F$ be a finite field of order $q$ that is an even degree extension of $\Ftwo$ and let $a \in F$.  Suppose that the key polynomial $g_{F,a}(x)$ from Definition \ref{Alan} is separable.  Then $g_{F,a}(x)$ does not have precisely four, six, or seven roots on $U_F$.  Furthermore,
\begin{enumerate}[(i).]
\item If $g_{F,a}(x)$ has zero roots on $U_F$, then the seven roots of $g_{F,a}(x)$ must be in two $\Pi_F$-orbits (either of sizes two and five or else of sizes three and four).  So $g_{F,a}(x)$ either has two roots in $F\setminus U_F$ and five roots on $U_{\F_{q^5}}\setminus F$ or else it has four roots in $\F_{q^2} \setminus F$ and three roots on $U_{\F_{q^3}} \setminus F$.
\item If $g_{F,a}(x)$ has one root on $U_F$, then the six remaining roots must either be in one $\Pi_F$-orbit of size six (which yields six roots in $\F_{q^3}\setminus (U_{\F_{q^3}}\cup F)$) or else they must be in three $\Pi_F$-orbits each of size two (which yields six roots in $F\setminus U_F$).
\item If $g_{F,a}(x)$ has two roots on $U_F$, then the five remaining roots must be in two $\Pi_F$-orbits of sizes two and three (which yields two roots in $F\setminus U_F$ and three roots on $U_{\F_{q^3}}\setminus F$).
\item If $g_{F,a}(x)$ has three roots on $U_F$, then the four remaining roots must be in one $\Pi_F$-orbit of size four (yielding four roots in $\F_{q^2}\setminus F$).
\item If $g_{F,a}(x)$ has five roots on $U_F$, then the two remaining roots must be in one $\Pi_F$-orbit of size two (yielding two roots in $F\setminus U_F$).
\end{enumerate}
\end{proposition}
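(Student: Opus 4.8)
The plan is to read the orbit structure of the root set $R$ under the conjugate-reciprocal group $\Pi_F$ straight out of Lemma~\ref{Ernest}, and then translate each possible orbit structure into a statement about where the roots lie by invoking Lemma~\ref{David}. Since $g_{F,a}(x)$ is self-conjugate-reciprocal, Proposition~\ref{Helen} says $R$ is a union of $\Pi_F$-orbits, and Lemma~\ref{Ernest} says the number $t$ of these orbits is even. The elementary observation driving everything is that, by the characterization $U_F=\{u\in\Fu:\pi_F(u)=u\}$ from Definition~\ref{Ursula}, a root $r$ (which is nonzero, as $g_{F,a}(0)=1$) lies on $U_F$ precisely when its $\Pi_F$-orbit is the singleton $\{r\}$. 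Hence, writing $Z$ for the number of roots of $g_{F,a}(x)$ on $U_F$, exactly $Z$ of the $t$ orbits are singletons and the other $7-Z$ roots fall into $t-Z$ orbits of size at least $2$; since $t$ is even, the number $t-Z$ of these nontrivial orbits has the same parity as $Z$.

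I would then run the case analysis on $Z\in\{0,1,\dots,7\}$, each time listing the partitions of $7-Z$ into parts of size $\geq 2$ whose number of parts is congruent to $Z$ modulo $2$. This is quick: $Z=0$ forces part-sizes $\{2,5\}$ or $\{3,4\}$; $Z=1$ forces $\{6\}$ or $\{2,2,2\}$; $Z=2$ forces $\{2,3\}$; $Z=3$ forces $\{4\}$; $Z=5$ forces $\{2\}$; and $Z\in\{4,6,7\}$ cannot occur, since an even number of parts of size $\geq 2$ cannot sum to $3$ (ruling out $Z=4$) or to $1$ (ruling out $Z=6$), while $Z=7$ would make $t=7$ odd. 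This already yields the first assertion of the proposition (no exactly four, six, or seven roots on $U_F$) together with the orbit-size multisets appearing in (i)--(v).

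To finish, I would apply Lemma~\ref{David} and the remark following it to convert orbit sizes into field locations. With $q=\card{F}$, an orbit of size $1$ consists of an element of $U_F$, while an orbit of size $n\geq 2$ consists of elements $r$ with $[F(r):F]=n$ and $r\in U_{F(r)}$ if $n$ is odd, and with $[F(r):F]=n/2$ if $n$ is even, where additionally $r\notin U_{F(r)}$ when $n\equiv 2\pmod 4$. Thus size $2\Rightarrow r\in F\setminus U_F$, size $3\Rightarrow r\in U_{\F_{q^3}}\setminus F$, size $4\Rightarrow r\in\F_{q^2}\setminus F$, size $5\Rightarrow r\in U_{\F_{q^5}}\setminus F$, and size $6\Rightarrow r\in\F_{q^3}\setminus(U_{\F_{q^3}}\cup F)$; plugging these into the multisets from the previous paragraph gives precisely the descriptions in (i)--(v). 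No step here is a genuine obstacle; the one place to be careful is distinguishing $n\equiv 2\pmod 4$ from $n\equiv 0\pmod 4$, which is exactly why the size-$4$ case in (iv) places the roots in $\F_{q^2}\setminus F$ without claiming they avoid $U_{\F_{q^2}}$, whereas the size-$6$ cases can assert avoidance of $U_{\F_{q^3}}$.
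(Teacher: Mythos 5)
Your proof is correct and follows essentially the same route as the paper: Lemma~\ref{Ernest} forces an even number of $\Pi_F$-orbits, roots on $U_F$ are exactly the singleton orbits, and the parity-constrained partitions of $7$ combined with Lemma~\ref{David} (including the $n\equiv 2\pmod 4$ versus $n\equiv 0\pmod 4$ distinction for locating roots relative to unit circles) give all the cases. Your write-up is in fact somewhat more explicit than the paper's, which leaves the case enumeration for $Z\in\{0,1,2,3,5\}$ as ``simple consequences.''
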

\begin{proof}
From Lemma \ref{Ernest}, we know that the seven distinct roots of $g_{F,a}(x)$ are organized into an even number of $\Pi_F$-orbits.
Recall that the roots that lie on $U_F$ are precisely those in singleton orbits.
\begin{itemize}
\item So there cannot be seven roots on $U_F$, as this would mean that $R$ contains seven $\Pi_F$-orbits.
\item Nor can there be six roots on $U_F$, as this would force the seventh to be alone in its own orbit, making it a seventh a root on $U_F$.
\item Nor can there be four roots on $U_F$, as this would mean we have four singleton orbits, and the remaining three roots would need to be organized into an even number of orbits.  This means two orbits, so one of these remaining orbits would be of size one and thus place a fifth root on $U_F$.
\end{itemize}
The rest of the statements in this theorem are simple consequences of the fact that we must organize the seven distinct roots of $g_{F,a}(x)$ into an even number of $\Pi_F$-orbits, and the facts about the sizes of those orbits from Lemma \ref{David}.
\end{proof}

\subsection{Conclusion}

We combine the results of Lemma \ref{Cecilia} and Proposition \ref{Joseph} to prove all the claims in Theorem \ref{Mary} except those in the case where $F=\Ffour$.
If $F=\Ffour$, then our exponent $d=5$ is degenerate (a power of $2$ modulo $\card{F}-1$), in which case it is well known (see \cite[Theorem 1.1]{Katz-2012}) that $\{W_{F,d}(a): a \in \Fu\}=\{0,4\}=\{0,2\sqrt{\card{F}}\}$, and so Lemma \ref{William} shows that the set of counts of distinct roots on $U_F$ of key polynomials $g_{F,a}(x)$ with $a \in \Fu$ must be $\{1,3\}$.
Recall from Section \ref{Raphael} that Theorem \ref{George} is equivalent to Theorem \ref{Mary} by Lemma \ref{William}.
\section{Appendix}
Recall that in Subsection \ref{Nathan} we define the following polynomials in the ring $\Ftwo[x_1,\ldots,x_7]$:
\[
b(x)=\prod_{1 \leq i < j \leq 7} (x_i-x_j)
\]
and
\[
c(x)=b(x)^2 \sums{1 \leq i < j \leq 7} \frac{x_i x_j}{(x_i-x_j)^2},
\]
and since $c(x)$ is a symmetric polynomial, we let $\sigma_k(x_1,\ldots,x_7)$ denote the elementary symmetric polynomial of degree $k$ and write
\[
c(x_1,\ldots,x_n) = \sums{(e_1,\ldots,e_7) \in \N^7 \\ e_1 + 2 e_2 + \ldots + 7 e_7=42} \lambda_{(e_1,\ldots,e_7)} \sigma_1^{e_1} \sigma_2^{e_2} \cdots \sigma_7^{e_7},
\]
where each $\lambda_{e_1,\ldots,e_7} \in \Ftwo$.
The indices $(e_1,\ldots,e_7)$ such that $\lambda_{(e_1,\ldots,e_7)}=1$ are listed here on Tables \ref{Margaret}--\ref{Peter} in lexicographical order, showing that $(e_5,e_6)\not=(0,0)$ in the first $23$ rows, and $(e_1,e_2)\not=(0,0)$ subsequently.  Thus every nonzero $\lambda_{(e_1,\ldots,e_7)}$ has a positive value for at least one of $e_1$, $e_2$, $e_5$, or $e_6$.
\begin{table}[ht!]
\caption{Nonvanishing Terms of $c(x_1,\ldots,x_7)$}\label{Margaret}
\begin{center}
{\footnotesize \begin{tabular}{c|ccccccc||c|ccccccc}
 & \multicolumn{7}{c||}{$(e_1,\ldots,e_7)$ such} &  & \multicolumn{7}{c}{$(e_1,\ldots,e_7)$ such} \\ 
Term & \multicolumn{7}{c||}{that $\lambda_{(e_1,\ldots,e_7)}=1$} & Term & \multicolumn{7}{c}{that $\lambda_{(e_1,\ldots,e_7)}=1$} \\
No. & $e_1$ & $e_2$ & $e_3$ & $e_4$ & $e_5$ & $e_6$ & $e_7$ & No. & $e_1$ & $e_2$ & $e_3$ & $e_4$ & $e_5$ & $e_6$ & $e_7$ \\
\hline
  1 & 0 & 0 & 0 & 0 & 2 & 3 & 2 &  17 & 0 & 0 & 4 & 0 & 1 & 3 & 1 \\ 
  2 & 0 & 0 & 0 & 0 & 3 & 1 & 3 &  18 & 0 & 0 & 4 & 2 & 2 & 2 & 0 \\
  3 & 0 & 0 & 0 & 0 & 6 & 2 & 0 &  19 & 0 & 0 & 4 & 2 & 3 & 0 & 1 \\
  4 & 0 & 0 & 0 & 0 & 7 & 0 & 1 &  20 & 0 & 0 & 4 & 3 & 0 & 3 & 0 \\
  5 & 0 & 0 & 0 & 1 & 4 & 3 & 0 &  21 & 0 & 0 & 4 & 3 & 1 & 1 & 1 \\
  6 & 0 & 0 & 0 & 1 & 5 & 1 & 1 &  22 & 0 & 0 & 5 & 0 & 3 & 2 & 0 \\
  7 & 0 & 0 & 1 & 0 & 1 & 1 & 4 &  23 & 0 & 0 & 5 & 1 & 1 & 3 & 0 \\
  8 & 0 & 0 & 1 & 0 & 5 & 0 & 2 &  24 & 0 & 1 & 0 & 0 & 0 & 2 & 4 \\
  9 & 0 & 0 & 1 & 1 & 3 & 1 & 2 &  25 & 0 & 1 & 0 & 0 & 1 & 0 & 5 \\
 10 & 0 & 0 & 2 & 0 & 2 & 2 & 2 &  26 & 0 & 1 & 0 & 1 & 2 & 2 & 2 \\
 11 & 0 & 0 & 2 & 0 & 3 & 0 & 3 &  27 & 0 & 1 & 0 & 1 & 3 & 0 & 3 \\
 12 & 0 & 0 & 2 & 1 & 0 & 3 & 2 &  28 & 0 & 1 & 1 & 0 & 5 & 2 & 0 \\
 13 & 0 & 0 & 2 & 1 & 1 & 1 & 3 &  29 & 0 & 1 & 1 & 1 & 1 & 0 & 4 \\
 14 & 0 & 0 & 3 & 0 & 3 & 3 & 0 &  30 & 0 & 1 & 1 & 2 & 3 & 0 & 2 \\
 15 & 0 & 0 & 3 & 2 & 1 & 1 & 2 &  31 & 0 & 1 & 2 & 0 & 2 & 4 & 0 \\
 16 & 0 & 0 & 4 & 0 & 0 & 5 & 0 &  32 & 0 & 1 & 2 & 0 & 3 & 2 & 1
\end{tabular}}
\end{center}
\end{table}
\begin{table}
\caption{Nonvanishing Terms of $c(x_1,\ldots,x_7)$}
\begin{center}
{\footnotesize \begin{tabular}{c|ccccccc||c|ccccccc}
 & \multicolumn{7}{c||}{$(e_1,\ldots,e_7)$ such} &  & \multicolumn{7}{c}{$(e_1,\ldots,e_7)$ such} \\ 
Term & \multicolumn{7}{c||}{that $\lambda_{(e_1,\ldots,e_7)}=1$} & Term & \multicolumn{7}{c}{that $\lambda_{(e_1,\ldots,e_7)}=1$} \\
No. & $e_1$ & $e_2$ & $e_3$ & $e_4$ & $e_5$ & $e_6$ & $e_7$ & No. & $e_1$ & $e_2$ & $e_3$ & $e_4$ & $e_5$ & $e_6$ & $e_7$ \\
\hline
 33 & 0 & 1 & 2 & 2 & 0 & 2 & 2 &  65 & 1 & 0 & 0 & 0 & 0 & 1 & 5 \\
 34 & 0 & 1 & 2 & 2 & 1 & 0 & 3 &  66 & 1 & 0 & 0 & 0 & 4 & 0 & 3 \\
 35 & 0 & 1 & 3 & 0 & 1 & 2 & 2 &  67 & 1 & 0 & 0 & 1 & 2 & 1 & 3 \\
 36 & 0 & 1 & 3 & 1 & 3 & 2 & 0 &  68 & 1 & 0 & 1 & 0 & 2 & 0 & 4 \\
 37 & 0 & 1 & 3 & 3 & 1 & 0 & 2 &  69 & 1 & 0 & 1 & 0 & 4 & 3 & 0 \\
 38 & 0 & 1 & 4 & 1 & 0 & 4 & 0 &  70 & 1 & 0 & 1 & 1 & 0 & 1 & 4 \\
 39 & 0 & 1 & 4 & 1 & 1 & 2 & 1 &  71 & 1 & 0 & 1 & 2 & 2 & 1 & 2 \\
 40 & 0 & 2 & 0 & 0 & 2 & 0 & 4 &  72 & 1 & 0 & 2 & 0 & 2 & 3 & 1 \\
 41 & 0 & 2 & 0 & 0 & 4 & 3 & 0 &  73 & 1 & 0 & 2 & 2 & 0 & 1 & 3 \\
 42 & 0 & 2 & 0 & 1 & 0 & 1 & 4 &  74 & 1 & 0 & 3 & 0 & 0 & 3 & 2 \\
 43 & 0 & 2 & 0 & 2 & 2 & 1 & 2 &  75 & 1 & 0 & 3 & 0 & 4 & 2 & 0 \\
 44 & 0 & 2 & 2 & 0 & 1 & 1 & 3 &  76 & 1 & 0 & 3 & 1 & 2 & 3 & 0 \\
 45 & 0 & 2 & 2 & 0 & 5 & 0 & 1 &  77 & 1 & 0 & 3 & 2 & 2 & 0 & 2 \\
 46 & 0 & 2 & 2 & 1 & 3 & 1 & 1 &  78 & 1 & 0 & 3 & 3 & 0 & 1 & 2 \\
 47 & 0 & 2 & 2 & 2 & 2 & 0 & 2 &  79 & 1 & 0 & 4 & 0 & 2 & 2 & 1 \\
 48 & 0 & 2 & 2 & 3 & 0 & 1 & 2 &  80 & 1 & 0 & 4 & 1 & 0 & 3 & 1 \\
 49 & 0 & 2 & 3 & 0 & 3 & 0 & 2 &  81 & 1 & 1 & 0 & 0 & 4 & 2 & 1 \\  
 50 & 0 & 2 & 3 & 1 & 1 & 1 & 2 &  82 & 1 & 1 & 0 & 1 & 0 & 0 & 5 \\
 51 & 0 & 3 & 0 & 0 & 2 & 2 & 2 &  83 & 1 & 1 & 0 & 2 & 2 & 0 & 3 \\
 52 & 0 & 3 & 0 & 1 & 4 & 2 & 0 &  84 & 1 & 1 & 1 & 0 & 2 & 2 & 2 \\
 53 & 0 & 3 & 0 & 2 & 0 & 0 & 4 &  85 & 1 & 1 & 1 & 1 & 4 & 2 & 0 \\
 54 & 0 & 3 & 0 & 3 & 2 & 0 & 2 &  86 & 1 & 1 & 1 & 2 & 0 & 0 & 4 \\
 55 & 0 & 3 & 1 & 0 & 1 & 0 & 4 &  87 & 1 & 1 & 1 & 3 & 2 & 0 & 2 \\
 56 & 0 & 3 & 2 & 1 & 1 & 0 & 3 &  88 & 1 & 1 & 2 & 0 & 0 & 2 & 3 \\
 57 & 0 & 3 & 2 & 2 & 2 & 2 & 0 &  89 & 1 & 1 & 2 & 1 & 2 & 2 & 1 \\
 58 & 0 & 3 & 2 & 4 & 0 & 0 & 2 &  90 & 1 & 1 & 2 & 3 & 0 & 0 & 3 \\
 59 & 0 & 3 & 4 & 0 & 0 & 4 & 0 &  91 & 1 & 1 & 3 & 1 & 0 & 2 & 2 \\
 60 & 0 & 4 & 0 & 0 & 0 & 1 & 4 &  92 & 1 & 1 & 3 & 2 & 2 & 2 & 0 \\
 61 & 0 & 5 & 0 & 0 & 4 & 2 & 0 &  93 & 1 & 1 & 3 & 4 & 0 & 0 & 2 \\
 62 & 0 & 5 & 0 & 1 & 0 & 0 & 4 &  94 & 1 & 1 & 5 & 0 & 0 & 4 & 0 \\
 63 & 0 & 5 & 0 & 2 & 2 & 0 & 2 &  95 & 1 & 2 & 1 & 0 & 0 & 1 & 4 \\
 64 & 0 & 7 & 0 & 0 & 0 & 0 & 4 &  96 & 1 & 2 & 2 & 0 & 2 & 0 & 3
\end{tabular}}
\end{center}
\end{table}
\begin{table}
\caption{Nonvanishing Terms of $c(x_1,\ldots,x_7)$}
\begin{center}
{\footnotesize \begin{tabular}{c|ccccccc||c|ccccccc}
 & \multicolumn{7}{c||}{$(e_1,\ldots,e_7)$ such} &  & \multicolumn{7}{c}{$(e_1,\ldots,e_7)$ such} \\ 
Term & \multicolumn{7}{c||}{that $\lambda_{(e_1,\ldots,e_7)}=1$} & Term & \multicolumn{7}{c}{that $\lambda_{(e_1,\ldots,e_7)}=1$} \\
No. & $e_1$ & $e_2$ & $e_3$ & $e_4$ & $e_5$ & $e_6$ & $e_7$ & No. & $e_1$ & $e_2$ & $e_3$ & $e_4$ & $e_5$ & $e_6$ & $e_7$ \\
\hline
 97 & 1 & 2 & 2 & 1 & 0 & 1 & 3 & 129 & 2 & 2 & 1 & 0 & 1 & 0 & 4 \\
 98 & 1 & 3 & 0 & 0 & 0 & 0 & 5 & 130 & 2 & 2 & 1 & 0 & 3 & 3 & 0 \\
 99 & 1 & 3 & 1 & 0 & 4 & 2 & 0 & 131 & 2 & 2 & 1 & 2 & 1 & 1 & 2 \\
100 & 1 & 3 & 1 & 1 & 0 & 0 & 4 & 132 & 2 & 2 & 2 & 0 & 1 & 3 & 1 \\
101 & 1 & 3 & 1 & 2 & 2 & 0 & 2 & 133 & 2 & 2 & 2 & 2 & 3 & 0 & 1 \\
102 & 1 & 5 & 1 & 0 & 0 & 0 & 4 & 134 & 2 & 2 & 2 & 3 & 0 & 3 & 0 \\
103 & 2 & 0 & 0 & 0 & 0 & 2 & 4 & 135 & 2 & 2 & 2 & 3 & 1 & 1 & 1 \\
104 & 2 & 0 & 0 & 0 & 1 & 0 & 5 & 136 & 2 & 2 & 2 & 4 & 0 & 0 & 2 \\
105 & 2 & 0 & 0 & 0 & 2 & 5 & 0 & 137 & 2 & 2 & 3 & 0 & 3 & 2 & 0 \\
106 & 2 & 0 & 0 & 0 & 3 & 3 & 1 & 138 & 2 & 2 & 3 & 1 & 1 & 3 & 0 \\
107 & 2 & 0 & 0 & 2 & 0 & 3 & 2 & 139 & 2 & 2 & 4 & 0 & 0 & 4 & 0 \\
108 & 2 & 0 & 0 & 2 & 1 & 1 & 3 & 140 & 2 & 3 & 0 & 0 & 3 & 2 & 1 \\
109 & 2 & 0 & 1 & 1 & 3 & 3 & 0 & 141 & 2 & 3 & 0 & 2 & 1 & 0 & 3 \\
110 & 2 & 0 & 1 & 3 & 1 & 1 & 2 & 142 & 2 & 3 & 0 & 3 & 2 & 2 & 0 \\
111 & 2 & 0 & 2 & 1 & 0 & 5 & 0 & 143 & 2 & 3 & 0 & 5 & 0 & 0 & 2 \\
112 & 2 & 0 & 2 & 1 & 1 & 3 & 1 & 144 & 2 & 3 & 1 & 0 & 1 & 2 & 2 \\
113 & 2 & 0 & 2 & 2 & 0 & 2 & 2 & 145 & 2 & 3 & 1 & 1 & 3 & 2 & 0 \\
114 & 2 & 0 & 2 & 2 & 1 & 0 & 3 & 146 & 2 & 3 & 1 & 3 & 1 & 0 & 2 \\
115 & 2 & 0 & 3 & 0 & 1 & 2 & 2 & 147 & 2 & 3 & 2 & 1 & 1 & 2 & 1 \\
116 & 2 & 1 & 0 & 1 & 2 & 4 & 0 & 148 & 2 & 4 & 0 & 0 & 1 & 1 & 3 \\
117 & 2 & 1 & 0 & 1 & 3 & 2 & 1 & 149 & 2 & 4 & 0 & 0 & 4 & 2 & 0 \\
118 & 2 & 1 & 0 & 3 & 0 & 2 & 2 & 150 & 2 & 4 & 0 & 0 & 5 & 0 & 1 \\
119 & 2 & 1 & 0 & 3 & 1 & 0 & 3 & 151 & 2 & 4 & 0 & 1 & 3 & 1 & 1 \\
120 & 2 & 1 & 1 & 2 & 3 & 2 & 0 & 152 & 2 & 4 & 0 & 3 & 0 & 1 & 2 \\
121 & 2 & 1 & 1 & 4 & 1 & 0 & 2 & 153 & 2 & 4 & 1 & 0 & 3 & 0 & 2 \\
122 & 2 & 1 & 3 & 0 & 1 & 4 & 0 & 154 & 2 & 4 & 1 & 1 & 1 & 1 & 2 \\
123 & 2 & 2 & 0 & 0 & 3 & 0 & 3 & 155 & 2 & 5 & 0 & 1 & 1 & 0 & 3 \\
124 & 2 & 2 & 0 & 1 & 0 & 3 & 2 & 156 & 2 & 6 & 0 & 0 & 0 & 0 & 4 \\
125 & 2 & 2 & 0 & 1 & 1 & 1 & 3 & 157 & 3 & 0 & 0 & 1 & 2 & 3 & 1 \\
126 & 2 & 2 & 0 & 2 & 0 & 0 & 4 & 158 & 3 & 0 & 0 & 3 & 0 & 1 & 3 \\
127 & 2 & 2 & 0 & 2 & 2 & 3 & 0 & 159 & 3 & 0 & 1 & 0 & 2 & 2 & 2 \\
128 & 2 & 2 & 0 & 4 & 0 & 1 & 2 & 160 & 3 & 0 & 1 & 2 & 0 & 0 & 4
\end{tabular}}
\end{center}
\end{table}
\begin{table}
\caption{Nonvanishing Terms of $c(x_1,\ldots,x_7)$}\label{Peter}
\begin{center}
{\footnotesize \begin{tabular}{c|ccccccc||c|ccccccc}
 & \multicolumn{7}{c||}{$(e_1,\ldots,e_7)$ such} &  & \multicolumn{7}{c}{$(e_1,\ldots,e_7)$ such} \\ 
Term & \multicolumn{7}{c||}{that $\lambda_{(e_1,\ldots,e_7)}=1$} & Term & \multicolumn{7}{c}{that $\lambda_{(e_1,\ldots,e_7)}=1$} \\
No. & $e_1$ & $e_2$ & $e_3$ & $e_4$ & $e_5$ & $e_6$ & $e_7$ & No. & $e_1$ & $e_2$ & $e_3$ & $e_4$ & $e_5$ & $e_6$ & $e_7$ \\
\hline
161 & 3 & 0 & 1 & 2 & 2 & 3 & 0 & 190 & 3 & 4 & 1 & 0 & 0 & 0 & 4 \\
162 & 3 & 0 & 1 & 4 & 0 & 1 & 2 & 191 & 4 & 0 & 0 & 2 & 0 & 5 & 0 \\
163 & 3 & 0 & 2 & 0 & 0 & 2 & 3 & 192 & 4 & 0 & 0 & 2 & 1 & 3 & 1 \\
164 & 3 & 0 & 3 & 0 & 0 & 5 & 0 & 193 & 4 & 0 & 0 & 4 & 2 & 2 & 0 \\
165 & 3 & 0 & 3 & 2 & 2 & 2 & 0 & 194 & 4 & 0 & 0 & 4 & 3 & 0 & 1 \\
166 & 3 & 0 & 3 & 4 & 0 & 0 & 2 & 195 & 4 & 0 & 0 & 5 & 0 & 3 & 0 \\
167 & 3 & 0 & 5 & 0 & 0 & 4 & 0 & 196 & 4 & 0 & 0 & 5 & 1 & 1 & 1 \\
168 & 3 & 1 & 0 & 2 & 2 & 2 & 1 & 197 & 4 & 0 & 1 & 0 & 1 & 5 & 0 \\
169 & 3 & 1 & 0 & 4 & 0 & 0 & 3 & 198 & 4 & 0 & 1 & 3 & 1 & 3 & 0 \\
170 & 3 & 1 & 1 & 0 & 2 & 4 & 0 & 199 & 4 & 0 & 1 & 4 & 1 & 0 & 2 \\
171 & 3 & 1 & 1 & 2 & 0 & 2 & 2 & 200 & 4 & 0 & 3 & 0 & 1 & 4 & 0 \\
172 & 3 & 1 & 1 & 3 & 2 & 2 & 0 & 201 & 4 & 1 & 0 & 0 & 0 & 6 & 0 \\
173 & 3 & 1 & 1 & 5 & 0 & 0 & 2 & 202 & 4 & 1 & 0 & 0 & 1 & 4 & 1 \\
174 & 3 & 1 & 2 & 0 & 0 & 4 & 1 & 203 & 4 & 1 & 0 & 3 & 0 & 4 & 0 \\
175 & 3 & 1 & 3 & 1 & 0 & 4 & 0 & 204 & 4 & 1 & 0 & 3 & 1 & 2 & 1 \\
176 & 3 & 2 & 0 & 0 & 0 & 0 & 5 & 205 & 4 & 1 & 1 & 1 & 1 & 4 & 0 \\
177 & 3 & 2 & 0 & 0 & 2 & 3 & 1 & 206 & 4 & 2 & 0 & 1 & 1 & 3 & 1 \\
178 & 3 & 2 & 0 & 2 & 0 & 1 & 3 & 207 & 4 & 2 & 0 & 2 & 1 & 0 & 3 \\
179 & 3 & 2 & 1 & 0 & 0 & 3 & 2 & 208 & 4 & 2 & 1 & 0 & 1 & 2 & 2 \\
180 & 3 & 2 & 1 & 1 & 2 & 3 & 0 & 209 & 5 & 0 & 0 & 0 & 0 & 5 & 1 \\
181 & 3 & 2 & 1 & 3 & 0 & 1 & 2 & 210 & 5 & 0 & 0 & 3 & 0 & 3 & 1 \\
182 & 3 & 2 & 2 & 0 & 2 & 2 & 1 & 211 & 5 & 0 & 0 & 4 & 0 & 0 & 3 \\
183 & 3 & 2 & 2 & 1 & 0 & 3 & 1 & 212 & 5 & 0 & 1 & 1 & 0 & 5 & 0 \\
184 & 3 & 3 & 0 & 0 & 0 & 2 & 3 & 213 & 5 & 0 & 1 & 2 & 0 & 2 & 2 \\
185 & 3 & 3 & 0 & 1 & 2 & 2 & 1 & 214 & 5 & 0 & 2 & 0 & 0 & 4 & 1 \\
186 & 3 & 3 & 0 & 3 & 0 & 0 & 3 & 215 & 5 & 1 & 0 & 1 & 0 & 4 & 1 \\
187 & 3 & 3 & 1 & 1 & 0 & 2 & 2 & 216 & 5 & 2 & 0 & 0 & 0 & 2 & 3 \\
188 & 3 & 4 & 0 & 0 & 2 & 0 & 3 & 217 & 6 & 0 & 0 & 0 & 0 & 6 & 0 \\
189 & 3 & 4 & 0 & 1 & 0 & 1 & 3 & 218 & 6 & 0 & 0 & 0 & 1 & 4 & 1
\end{tabular}}
\end{center}
\end{table}
\section*{Acknowledgment}
The authors thank the anonymous reviewers for their comments, which helped improve the paper.


\begin{thebibliography}{DFHR06}

\bibitem[AKL15]{Aubry-Katz-Langevin}
Yves Aubry, Daniel~J. Katz, and Philippe Langevin.
\newblock Cyclotomy of {W}eil sums of binomials.
\newblock {\em J. Number Theory}, 154:160--178, 2015.

\bibitem[CCD00]{Canteaut-Charpin-Dobbertin}
Anne Canteaut, Pascale Charpin, and Hans Dobbertin.
\newblock Binary {$m$}-sequences with three-valued crosscorrelation: a proof of
  {W}elch's conjecture.
\newblock {\em IEEE Trans. Inform. Theory}, 46(1):4--8, 2000.

\bibitem[CD96]{Cusick-Dobbertin}
Thomas~W. Cusick and Hans Dobbertin.
\newblock Some new three-valued crosscorrelation functions for binary
  {$m$}-sequences.
\newblock {\em IEEE Trans. Inform. Theory}, 42(4):1238--1240, 1996.

\bibitem[DFHR06]{Dobbertin-Felke-Helleseth-Rosendahl}
Hans Dobbertin, Patrick Felke, Tor Helleseth, and Petri Rosendahl.
\newblock Niho type cross-correlation functions via {D}ickson polynomials and
  {K}loosterman sums.
\newblock {\em IEEE Trans. Inform. Theory}, 52(2):613--627, 2006.

\bibitem[Gam86a]{Games-Sequences}
Richard~A. Games.
\newblock The geometry of {$m$}-sequences: three-valued crosscorrelations and
  quadrics in finite projective geometry.
\newblock {\em SIAM J. Algebraic Discrete Methods}, 7(1):43--52, 1986.

\bibitem[Gam86b]{Games-Quadrics}
Richard~A. Games.
\newblock The geometry of quadrics and correlations of sequences.
\newblock {\em IEEE Trans. Inform. Theory}, 32(3):423--426, 1986.

\bibitem[Gol68]{Gold}
Robert Gold.
\newblock Maximal recursive sequences with 3-valued recursive cross-correlation
  functions.
\newblock {\em IEEE Trans. Inform. Theory}, 14(1):154--156, 1968.

\bibitem[Hel76]{Helleseth-1976}
Tor Helleseth.
\newblock Some results about the cross-correlation function between two maximal
  linear sequences.
\newblock {\em Discrete Math.}, 16(3):209--232, 1976.

\bibitem[Hel78]{Helleseth-1978}
Tor Helleseth.
\newblock A note on the cross-correlation function between two binary maximal
  length linear sequences.
\newblock {\em Discrete Math.}, 23(3):301--307, 1978.

\bibitem[HK98]{Helleseth-Kumar}
Tor Helleseth and P.~Vijay Kumar.
\newblock Sequences with low correlation.
\newblock In V.~S. Pless, W.~C. Huffman, and R.~A. Brualdi, editors, {\em
  Handbook of coding theory}, volume~II, chapter~21, pages 1765--1853.
  North-Holland, Amsterdam, 1998.

\bibitem[HX01]{Hollmann-Xiang}
Henk D.~L. Hollmann and Qing Xiang.
\newblock A proof of the {W}elch and {N}iho conjectures on cross-correlations
  of binary {$m$}-sequences.
\newblock {\em Finite Fields Appl.}, 7(2):253--286, 2001.

\bibitem[Kat12]{Katz-2012}
Daniel~J. Katz.
\newblock Weil sums of binomials, three-level cross-correlation, and a
  conjecture of {H}elleseth.
\newblock {\em J. Combin. Theory Ser. A}, 119(8):1644--1659, 2012.

\bibitem[Kat19]{Katz-2019}
Daniel~J. Katz.
\newblock Weil sums of binomials: properties, applications and open problems.
\newblock In Kai-Uwe Schmidt and Arne Winterhof, editors, {\em Combinatorics
  and Finite Fields: Difference Sets, Polynomials, Pseudorandomness and
  Applications}, volume~23 of {\em Radon Ser. Comput. Appl. Math.}, pages
  109--134. De Gruyter, Berlin, Boston, 2019.

\bibitem[LZ19]{Li-Zeng}
Nian Li and Xiangyong Zeng.
\newblock A survey on the applications of {N}iho exponents.
\newblock {\em Cryptogr. Commun.}, 11(3):509--548, 2019.

\bibitem[Nih72]{Niho}
Yoji Niho.
\newblock {\em Multi-valued cross-correlation function between two maximal
  linear recursive sequences}.
\newblock PhD thesis, University of Southern California, Los Angeles, 1972.

\bibitem[Ros06]{Rosendahl}
Petri Rosendahl.
\newblock A generalization of {N}iho's theorem.
\newblock {\em Des. Codes Cryptogr.}, 38(3):331--336, 2006.

\bibitem[Tra70]{Trachtenberg}
Herbert~Mitchell Trachtenberg.
\newblock {\em On the cross-correlation functions of maximal linear sequences}.
\newblock PhD thesis, University of Southern California, Los Angeles, 1970.

\bibitem[XLZH16]{Xia-Li-Zeng-Helleseth}
Yongbo Xia, Nian Li, Xiangyong Zeng, and Tor Helleseth.
\newblock An open problem on the distribution of a {N}iho-type
  cross-correlation function.
\newblock {\em IEEE Trans. Inform. Theory}, 62(12):7546--7554, 2016.

\end{thebibliography}
\end{document}